\newtheorem{Theorem}{Theorem}[section]
\newtheorem{Proposition}[Theorem]{Proposition}
\newtheorem{Lemma}[Theorem]{Lemma}
\newtheorem{Corollary}[Theorem]{Corollary}
\theoremstyle{definition}
\newtheorem{Definition}[Theorem]{Definition}
\newtheorem{Remark}[Theorem]{Remark}
\newcommand{\bTheorem}[1]{
\begin{Theorem} \label{T#1} }
\newcommand{\eT}{\end{Theorem}}
\newcommand{\bProposition}[1]{
\begin{Proposition} \label{P#1}}
\newcommand{\eP}{\end{Proposition}}
\newcommand{\bLemma}[1]{
\begin{Lemma} \label{L#1} }
\newcommand{\eL}{\end{Lemma}}
\newcommand{\bCorollary}[1]{
\begin{Corollary} \label{C#1} }
\newcommand{\eC}{\end{Corollary}}
\newcommand{\bRemark}[1]{
\begin{Remark} \label{R#1} }
\newcommand{\eR}{\end{Remark}}
\newcommand{\E}{\mathcal{E}}
\newcommand{\bDefinition}[1]{
\begin{Definition} \label{D#1} }
\newcommand{\eD}{\end{Definition}}
\newcommand{\Q}{\Omega}
\newcommand{\dif}{\mathrm{d}}
\newcommand{\Del}{\Delta_x}
\newcommand{\vme}{\vm_\ep}
\newcommand{\rmk}[1]{\textcolor{red}{#1}}
\newcommand{\tvm}{\tilde{\vc{m}}}
\newcommand{\bfphi}{\boldsymbol{\varphi}}
\newcommand{\BV}{\mathrm{BV}}
\newcommand{\bFormula}[1]{
\begin{equation} \label{#1}}
\newcommand{\eF}{\end{equation}}
\newcommand{\Ov}[1]{\overline{#1}}
\newcommand{\DC}{C^\infty_c}
\newcommand{\aleq}{\lesssim}
\newcommand{\ageq}{\gtrsim}
\newcommand{\vr}{\varrho}
\newcommand{\vre}{\vr_\ep}
\newcommand{\vue}{\vu_\ep}
\newcommand{\tvr}{\tilde \vr}
\newcommand{\vt}{\vartheta}
\newcommand{\vu}{\vc{u}}
\newcommand{\vm}{\vc{m}}
\newcommand{\vc}[1]{{\bf #1}}
\newcommand{\Div}{{\rm div}_x}
\newcommand{\Grad}{\nabla_x}
\newcommand{\dx}{\,{\rm d} {x}}
\newcommand{\dt}{\,{\rm d} t }
\newcommand{\intO}[1]{\int_{\Omega} #1 \, \dx}
\newcommand{\intQ}[1]{\int_{{\Q}} #1 \ \dx}
\newcommand{\D}{{\rm d}}
\newcommand{\ep}{\varepsilon}
\newcommand{\mathd}{\mathrm{d}}
\newcommand{\tmop}{\text}
\definecolor{Cgrey}{rgb}{0.85,0.85,0.85}
\definecolor{Cblue}{rgb}{0.50,0.85,0.85}
\definecolor{Cred}{rgb}{1,0,0}
\definecolor{fancy}{rgb}{0.10,0.85,0.10}
\newcommand\Cbox[2]{%
    \newbox\contentbox%
    \newbox\bkgdbox%
    \setbox\contentbox\hbox to \hsize{%
        \vtop{
            \kern\columnsep
            \hbox to \hsize{%
                \kern\columnsep%
                \advance\hsize by -2\columnsep%
                \setlength{\textwidth}{\hsize}%
                \vbox{
                    \parskip=\baselineskip
                    \parindent=0bp
                    #2
                }%
                \kern\columnsep%
            }%
            \kern\columnsep%
        }%
    }%
    \setbox\bkgdbox\vbox{
        \color{#1}
        \hrule width  \wd\contentbox %
               height \ht\contentbox %
               depth  \dp\contentbox
        \color{black}
    }%
    \wd\bkgdbox=0bp%
    \vbox{\hbox to \hsize{\box\bkgdbox\box\contentbox}}%
    \vskip\baselineskip%
}
\date{}
\begin{document}

%%%%%%%%%%%%%%%%%%%%%%%%%%%%%%%%

\title{Dissipative solutions and semiflow selection for the complete Euler system}

\author{Dominic Breit}
\address[D. Breit]{Department of Mathematics, Heriot-Watt University, Riccarton Edinburgh EH14 4AS, UK}
\email{d.breit@hw.ac.uk}

\author{Eduard Feireisl}
\address[E.Feireisl]{Institute of Mathematics AS CR, \v{Z}itn\'a 25, 115 67 Praha 1, Czech Republic
\and Institute of Mathematics, TU Berlin, Strasse des 17.Juni, Berlin, Germany }
\email{feireisl@math.cas.cz}
\thanks{The research of E.F. leading to these results has received funding from
the Czech Sciences Foundation (GA\v CR), Grant Agreement
18--05974S. The Institute of Mathematics of the Academy of Sciences of
the Czech Republic is supported by RVO:67985840.}

\author{Martina Hofmanov\'a}
\address[M. Hofmanov\'a]{Fakult\"at f\"ur Mathematik, Universit\"at Bielefeld, D-33501 Bielefeld, Germany}
\email{hofmanova@math.uni-bielefeld.de}
\thanks{M.H. gratefully acknowledges the financial support by the German Science Foundation DFG via the Collaborative Research Center SFB1283.}

\begin{abstract}

To circumvent the ill-posedness issues present in various models of continuum fluid mechanics, we present a dynamical systems approach  aiming at selection of  physically relevant solutions. Even under the presence of infinitely many solutions  to the full Euler system describing the motion of a compressible inviscid fluid, our approach permits to select a system of solutions (one trajectory for every initial condition) satisfying the classical semiflow property. 
Moreover, the selection  respects  the well accepted admissibility criteria for physical solutions, namely,  maximization of the entropy production rate and the weak--strong uniqueness principle. Consequently, strong solutions are always selected whenever they exist and stationary states are stable and included in the selection as well. To this end, we introduce a notion of \emph{dissipative solution}, which is given by a triple of density, momentum and total entropy defined as expectations of a suitable measure--valued solution.

%The approach relies on a class of \emph{dissipative solutions}, which are given by a triple of density, momentum and entropy defined as expectations of a suitable measure--valued solution. We show that this class admits a measurable semiflow selection maximizing the entropy production rate. In addition, this class  satisfies the weak--strong uniqueness principle.

% and satisfy the weak--strong uniqueness principle. In addition, 
%this class of solutions . 
%
%These are expectations of a suitable measure--valued solution and satisfy the weak--strong uniqueness principle. In addition, 
%this class of solutions admits a measurable semiflow selection maximizing the entropy production rate. 

\end{abstract}

\keywords{ Euler system, compressible fluid, weak solution, dissipative solution, semiflow selection}

\date{\today}

\maketitle

\tableofcontents

\section{Introduction}
\label{I}

The \emph{Euler system} describing the motion of a general compressible inviscid fluid represents one of the basic 
models in the framework of continuum fluid mechanics. The unknown fields are the fluid density 
$\vr = \vr(t,x)$, the momentum $\vm = \vm(t,x)$, and the energy $\mathcal{E} = \E(t,x)$
satisfying the system of partial differential equations: 
\begin{equation} \label{I1}
\begin{split}
\partial_t \vr + \Div \vm &= 0,\\
\partial_t \vm + \Div \left( \frac{\vm \otimes \vm}{\vr} \right) + \Grad p &= 0,\\
\partial_t \E + \Div \left[ \left( \E + p \right) \frac{\vm}{\vr} \right] &= 0.
\end{split}
\end{equation}
Writing the energy as a sum of its kinetic and internal components, 
\[
\E = \frac{1}{2} \frac{ |\vm|^2}{\vr} + \vr e,
\]
we suppose that the pressure $p$ and the internal energy $e$ satisfy 
the caloric equation of state in the form
\begin{equation} \label{I2}
(\gamma - 1) \vr e = p,\ \mbox{where}\ \gamma > 1 \ \mbox{is the adiabatic constant.}
\end{equation}
In addition, we introduce the absolute temperature $\vt$ through the Boyle--Mariotte thermal equation of state: 
\begin{equation} \label{I3}
p = \vr \vt\ \mbox{yielding} \ e = c_v \vt ,\ c_v = \frac{1}{\gamma - 1}.
\end{equation}
Supposing that the fluid occupies a bounded spatial domain $\Omega \subset R^N$, $N=1,2,3$ we impose the impermeability 
boundary condition
\begin{equation} \label{I4}
\vm \cdot \vc{n}|_{\partial \Omega} = 0.
\end{equation}
Finally, the initial state of the fluid is given through the initial conditions 
\begin{equation} \label{I5}
\vr(0, \cdot) = \vr_0,\ \vm(0, \cdot) = \vm_0, \ \mathcal{E}(0, \cdot) = \mathcal{E}_0.
\end{equation}
The Second law of thermodynamics is enforced through the entropy balance equation 
\begin{equation} \label{I6}
\partial_t (\vr s) + \Div (s \vm ) = 0 \ \mbox{or}\ \partial_t s + \left( \frac{ \vm }{\vr} \right) \cdot \Grad s = 0,
\end{equation}
where the entropy $s$ is given as
\[
s(\vr, \vt) = \log(\vt^{c_v}) - \log(\vr).
\]
There is a vast amount of literature dedicated to the mathematical theory of the Euler system. In particular, it is known that 
the initial--value problem is well posed \emph{locally} in time in the class of smooth solutions, see e.g.
the monograph by Majda \cite{Majd} or the more recent treatment by 
Benzoni--Gavage and Serre \cite{BenSer}. Smooth solutions, however, develop singularities in a finite time for a fairly 
general class of initial data, see e.g. Smoller \cite{SMO}. Thus if the Euler system is accepted as an adequate description of the 
fluid motion in a long run, a concept of generalized solution is needed. 

The modern theory of partial differential equations is based on the concept of weak solution, where the derivatives are understood 
in the sense of distributions. This gives rise to a large class of objects in which \emph{uniqueness} might be lost. Several admissibility criteria have been proposed to select the physically relevant weak solution, among which the 
\emph{entropy inequality} 
\begin{equation} \label{I7}
\partial_t (\vr s) + \Div (s \vm ) \geq 0 
\end{equation}
reflecting the Second law of thermodynamics. The recent adaptation of the method of convex integration, 
developed in the context of incompressible fluids by De Lellis and Sz\' ekelyhidi \cite{DelSze3}, gave rise to numerous examples of ill--posedness also in the class of compressible fluids, see Chiodaroli, De Lellis, Kreml
\cite{ChiDelKre}, Chiodaroli and Kreml \cite{ChiKre} , Chiodaroli et al. \cite{ChKrMaSwI}, among others. In particular, it was 
shown in \cite{FeKlKrMa} that the Euler system \eqref{I1}--\eqref{I5} is ill--posed, specifically it admits infinitely many 
weak solutions for a large class of initial data. Moreover, these solutions satisfy the entropy inequality 
\eqref{I7}. In addition, examples of regular initial data producing infinitely many weak solutions in the long run have been 
also obtained  in \cite{FeKlKrMa}.

\subsection{Admissibility criteria}
\label{AD}

In view of these facts, more refined admissibility criteria are needed in order to select the physically relevant solutions. Dafermos 
\cite{Dafer} proposed a selection criterion based on \emph{maximality} of the total entropy. We formulate it in a slightly stronger form, avoiding the issue of existence of right-sided time derivatives. Specifically, suppose that 
$[ \vr_i, \vm_i, \E_i ]$, $i = 1,2$ are two solutions of the Euler system \eqref{I1}--\eqref{I5} with the associated total 
entropies 
\[
S_i = \vr_i s(\vr_i, \vm_i, \E_i),\ i = 1,2.
\]
We say that
\[
[ \vr_1, \vm_1, \E_1 ] {\succ}_{\mathcal{D}} [ \vr_2, \vm_2, \E_2 ]
\] 
if:
\begin{itemize}
\item 
there exists $\tau \geq 0$ such that 
\[
[ \vr_1(t, \cdot), \vm_1(t, \cdot), \E_1(t,\cdot) ]
= [ \vr_2(t, \cdot), \vm_2(t, \cdot), \E_2(t,\cdot) ] \ \mbox{for any}\ t \in [0,\tau];
\]
\item
there exists $\delta > 0$ such that 
\begin{equation}\label{eq:D}
\intO{ S_1(t+, x) } \geq \intO{ S_2(t+, x) } \ \mbox{for all}\ t \in (\tau, \tau + \delta). 
\end{equation}
\end{itemize}
A weak solution $[ \vr, \vm, E]$ is called maximal (admissible) if it is maximal with respect to the relation 
${\succ}_{\mathcal{D}}$.

A  modification of this criterion was further investigated in the context of the barotropic Euler system in \cite{EF2013_N2}. However, in this case it is rather the total energy that shall be minimized, according to the principle of maximal energy dissipation. The criterium of \cite{EF2013_N2}  translated to our setting of the full Euler system, leads to a weaker version of the condition \eqref{eq:D}. Namely, we require that there exists a sequence $(\tau_{n})_{n}$, $\tau_{n}> \tau$, $\tau_{n}\to \tau$ such that 
\begin{equation}\label{eq:D1}
\intO{ S_1(\tau_{n}+, x) } \geq \intO{ S_2(\tau_{n}+, x) }\ \mbox{for all}\ n\in\mathbb{N}.
\end{equation}
It was shown in \cite{EF2013_N2} that the  solutions constructed there by the method of convex integration   do not fulfill the corresponding criterium for maximal energy dissipation, suggesting that such a criterium shall be retained in order to exclude  nonphysical solutions.

To compare these two criteria, let us denote by  ${\succ}_{\mathcal{F}}$ the partial ordering induced by \eqref{eq:D1}. The following holds true: If a solution is maximal with respect to ${\succ}_{\mathcal{F}}$ then it is also maximal with respect to ${\succ}_{\mathcal{D}}$. This may seem surprising at first sight since \eqref{eq:D} is obviously a stronger condition than \eqref{eq:D1}. But it is exactly for this reason why the implication of maximality is valid. More precisely, the weaker condition \eqref{eq:D1} allows to compare more solutions, for instance also those  that oscillate around each other and that are therefore not comparable by the condition \eqref{eq:D}.
%%  Building on solutions of the 1-D Riemann problem it yields  infinitely many ``wild'' solutions of the 2-D problem for given initial data,
%%see \cite{ChiKre}. However, these solutions \emph{are not maximal}.
%building on solutions of the 1-D Riemann problem
%\rmk{dominic, could you please reformulate the sentence with 'unfortunately'? it still somehow doesn't make sense to me: }Unfortunately,  solutions of the 1-D Riemann problem that are currently being used as building blocks
%for the infinitely many ``wild'' solutions by convex integration \emph{are not maximal} when considered as solutions of the 2-D problem, 
%see \cite{ChiKre}.

An alternative criterion enforcing the Second law of thermodynamics is \emph{maximility of the global entropy production} proposed in   
\cite{BreFei17B}. In accordance with the Schwartz representation principle, the inequality in the 
entropy balance \eqref{I7} can be interpreted as 
\[
\partial_t S + \Div \left( S \frac{\vm}{\vr} \right) = \rmk{\Sigma}, \ S = \vr s,
\]  
where $\Sigma$ is a non--negative Borel measure on $[0, \infty) \times \Ov{\Omega}$. Similarly to the above, we say that 
\begin{equation} \label{I8}
[ \vr_1, \vm_1, \E_1 ]{\succ} [ \vr_2, \vm_2, \E_2 ]
\ \Leftrightarrow \Sigma_1 \geq \Sigma_2 \ \mbox{on any compact subset of}\ [0, \infty) \times \Ov{\Omega}.
\end{equation}
It was shown that maximal solutions with respect to the relation $\succ$ exist for the full Euler system under rather general hypotheses on the data as well as  the constitutive relations.

Despite these efforts, however, none of the above selection criteria proved  sufficient to guarantee the desired well-posedness result. The major open question therefore remains: how can physically relevant solutions be distinguished from the nonphysical ones? The aim of the present paper is to take into account another physical property of an evolution system, namely, the so-called semiflow property: starting the system at time $0$, letting it run to time $s>0$ and then restarting and continue for the time $t>0$, the state of the system at the final time $s+t$ should be the same as if  the system ran directly from $0$ to  $s+t$. If uniqueness holds, the semiflow property immediately follows. However, for systems where uniqueness is unknown or not valid, it is generally not clear whether such a semiflow even exists.

%\rmk{The solutions constructed in the present paper will maximize the global entropy production rate $\sigma=\int_{\Omega}\Sigma(x) \,\dif x$ on $[0,\infty)$ .}

%The dissipative solutions discussed in the present paper will be maximal with respect to $\succ$.

\subsection{Semiflow solutions}\label{SS1}

On the following pages we show how to approach this problem and we construct a solution semiflow to the complete Euler system \eqref{I1}--\eqref{I4}. In particular, this leads to a well-defined dynamical system associated to \eqref{I1}--\eqref{I4}, which depends in a measurable way on the initial condition. To this end, it is in the first place necessary to identify the correct phase variables of the system together with a suitable notion of solution. Even though the system \eqref{I1} describes the evolution of the density, momentum and energy, it turns out to be beneficial to replace the energy by the total entropy.
In other words, the state of the fluid at a given time 
$t \geq 0$ will be determined by the values of three phase variables, 
\[
\mbox{the density}\ \vr(t, \cdot), \ \mbox{the momentum}\ \vm(t,\cdot),\ \mbox{the total entropy}\ S (t,\cdot) = \vr s (t, \cdot),
\]
interpreted through their spatial integral means as quantities in suitable abstract function spaces.

The reason why we prefer the 
entropy $S$ instead of the energy $\E$ is the lack of suitable {\it a~priori} bounds for the latter. The integral means 
\[
t \mapsto \intO{ \vr(t, \cdot) \varphi },\ t \mapsto \intO{ \vm(t, \cdot) \cdot \bfphi }
\] 
will be continuous for $t \in [0, \infty)$ for any smooth $\varphi$, $\bfphi$, while
\[
t \mapsto \intO{ S (t, \cdot) \varphi } \in \BV_{{\rm loc}}[0, \infty); 
\] 
whence the limits 
\[
\intO{ S (t+, \cdot) \varphi },\ \intO{ S (t-, \cdot) \varphi } 
\ \mbox{are well defined for}\ t \in [0,\infty)
\]
with the convention 
\[
S(0-) = S_0.
\]
Hence, another difficulty regarding the classical theory of dynamical systems  stems from the fact that solutions are not continuous in time, which  in particular holds here for the  total entropy. We overcome this issue by replacing continuity by existence of  one--sided limits at every time $t\geq0$.

As the next step, we shall determine what input information on the initial state of the system is necessary. Apart from the initial state for the density, momentum and total entropy, we shall be also given the total energy $E_{0}$ which is a constant of the motion and provides various bounds for all the corresponding quantitities. Given the initial state of the system $\mathbb{U}_0=(\vc{U}_0,E_0)$, where 
\begin{equation} \label{I9}
\vc{U}_0 = [\vr_0, \vm_0, S_0], \ \mbox{together with the total energy}\ 
E_0 \geq \intO{ \left[ \frac{|\vm_0|^2}{\vr_0} + \vr_0 e(\vr_0, S_0) \right] }, 
\end{equation}
we then identify in a unique way the state of the system at the time $t \geq 0$, 
\begin{align*}
&\mathbb{U}[t; (\vc{U}_0, E_0) ] = \Big([\vr(t, \cdot), \vm(t, \cdot), S(t-, \cdot)], E_0 \Big), 
\\ &E_0 \geq \intO{ \left[ \frac{|\vm(t, \cdot)|^2}{\vr(t, \cdot)} + \vr(t, \cdot) e\Big(\vr(t, \cdot), S (t \pm, \cdot) \Big) \right] }.
\end{align*}
In addition, 
the mapping $t \mapsto \mathbb{U}[t; (\vc{U}_0, E_0) ]$ will enjoy the \emph{semiflow property}:
\begin{itemize}
\item 
\[
\mathbb{U}[0; (\vc{U}_0, E_0) ] = (\vc{U}_0, E_0) 
\]
\item
\[
\mathbb{U}\Big[t_1 + t_2; (\vc{U}_0, E_0) \Big] = \mathbb{U}\Big[ t_1; \mathbb{U}[t_2, (\vc{U}_0, E_0) ] \Big] 
\ \mbox{for any}\ t_1, \ t_2 \geq 0.
\]

\end{itemize} 

\noindent
In particular, the state of the system at a time $t \geq 0$ is uniquely determined in terms of the initial data 
$[\vr_0, \vm_0, S_0]$ and the initial energy $E_0$.

The trajectories
\[
t \mapsto \mathbb{U}\Big[t; (\vc{U}_0, E_0) \Big] = 
\Big([ \vr(t, \cdot), \vm(t, \cdot), S(t-, \cdot)], E_0 \Big)
\]
represent a generalized - dissipative - solution to the Euler system \eqref{I1}--\eqref{I5}, which we introduce in Section \ref{W}. It will be shown that they comply with the 
following stipulations:

\begin{itemize}

\item \emph{Weak--strong uniqueness.} Suppose that the Euler system \eqref{I1}--\eqref{I5} 
admits a classical solution $[\vr, \vm, \E]$ on a time interval $[0, T_{\rm max})$, 
\[
E_0 = \intO{ \E_0 }.
\]

Then for $\vc{U}_0 = [\vr_0, \vm_0, S(\vr_0, \vm_0, \E_0)]$ we have
\[
\mathbb{U} \Big[ t; (\vc{U}_0, E_0) \Big] = 
\left( \Big[\vr(t, \cdot), \vm(t, \cdot), S(\vr(t,\cdot), \vm(t, \cdot), \E(t, \cdot) ) \Big], E_0 \right)
\]
for all $t \in [0, T_{\rm max})$.

\item \emph{Maximal  entropy production.}
Suppose that 
\[
\left( [\vr_1, \vm_1, S_1], E_0 \right) = \mathbb{U} \Big[\cdot\,; (  \vc{U}_0 , E_0 ) \Big]  
\]
and that $\left( [\vr_2, \vm_2, S_2], E_0 \right)$ is another dissipative solution starting from the same initial data 
$(  \vc{U}_0, E_0 )$ and such that
\[
\sigma_1(t):=\int_{\Omega}(S_{1}(t)-S_{0})\,\dif x \leq \sigma_2(t) :=\int_{\Omega}(S_{2}(t)-S_{0})\,\dif x \ \text{on}\ [0,\infty),
\]
where $\sigma_i$ is the entropy production rate associated to $[\vr_i, \vm_i, S_i]$, $i = 1,2$, respectively. 

Then 
\[
\sigma_1 = \sigma_2.
\] 

\item \emph{Stability of stationary states.} 
Let $\vr = \Ov{\vr} > 0$, $\vm = 0$, $ \Ov{\E}  = \frac{1}{|\Omega|} E_0$ be a stationary solution of the Euler system \eqref{I1}. Suppose that the system reaches the equilibrium state at some time $T\geq0$, i.e.
\[
\mathbb{U} \left[ T; \Big( [\vr_0, \vm_0, S_0]; E_0 \Big) \right] = 
\left( \Big[ \Ov{\vr}, 0, S(\Ov{\vr}, 0, \Ov{\E}) \Big], E_0 \right) .
\]

Then
\[
\mathbb{U} \left[ t; \Big( [\vr_0, \vm_0, S_0]; E_0 \Big) \right] = 
\left( \Big[ \Ov{\vr}, 0, S(\Ov{\vr}, 0, \Ov{\E}) \Big], E_0 \right) 
\]
for all $t \geq T$.

\end{itemize}

\subsection{Selection procedure}

The semiflow $\mathbb{U}$ will be constructed by means of the selection procedure originally proposed in the context of stochastic Markov 
processes by Krylov \cite{KrylNV}. The method was later adapted to deterministic evolutionary problems with time continuous solutions by Cardona and Kapitanski \cite{CorKap}. We have developed a similar approach to the \emph{isentropic} Euler system in \cite{BreFeiHof19}, where we also relaxed the continuity assumption on the trajectories. We refer the reader to  Section~\ref{dD} for details of the abstract formulation.

In Section \ref{W}, 
we specify a class of generalized solutions to the Euler system \eqref{I1}--\eqref{I5} termed \emph{dissipative solutions}. They are, 
loosely speaking, the expected values of suitable measure--valued solutions as introduced in \cite{BreFei17} but with an additional refinement regarding the associated concentration defect measures.

Applying carefully the selection procedure of \cite{KrylNV}, \cite{CorKap}, \cite{BreFeiHof19}, we obtain the desired semiflow solution in Section~\ref{S}.
The paper is concluded by a  discussion of further extensions in Section~\ref{D}.  In particular, in Section \ref{sec:D} we compare our construction to the two versions of Dafermos' criterium introduced in Section \ref{AD}.
 
\section{Dynamical systems approach}
\label{dD}

We start by adapting the general approach of \cite{KrylNV}, \cite{CorKap} to problems with discontinuous solutions paths, see also \cite{BreFeiHof19}.
Suppose that the state of a physical system at each instant $t \geq 0$ is characterized by an abstract vector $\mathbb{U}(t) \in \mathcal{S}$ ranging in a suitable \emph{phase space} $\mathcal{S}$. In our setting, the state space will be a separable Hilbert space or a Polish topological space. 

Consider a mapping  
\[
\mathbb{U}: [0, \infty) \times \mathcal{S} \to \mathcal{S},\ [t, \mathbb{U}_0] \mapsto \mathbb{U}[t; \mathbb{U}_0],
\]
meaning $\mathbb{U}[t; \mathbb{U}_0]$ is the state of the system emanating 
from the initial state $\mathbb{U}_0$ 
at a time $t \geq 0$.

\begin{Definition}[Semiflow] \label{dDD1}

We say that a mapping $\mathbb{U}: [0, \infty) \times \mathcal{S} \to \mathcal{S}$ is a \emph{semiflow} if:
\begin{enumerate}
\item[(a)] The flow starts at the initial datum, that is
\[
\mathbb{U}[0, \mathbb{U}_0] = \mathbb{U}_0 \ \mbox{for any} \ \mathbb{U}_0 \in \mathcal{S}.
\]

\item[(b)] It has the semigroup property, that is
\[
\mathbb{U}[t_1 + t_2, \mathbb{U}_0] = \mathbb{U}\left[ t_2 , \mathbb{U}[t_1, \mathbb{U}_0] \right] \ \mbox{for any}\ \mathbb{U}_0 \in \mathcal{S},\ t_1,t_2\geq0.
\]

\end{enumerate}

\end{Definition}

In the classical theory of dynamical systems, \emph{continuity} of the semiflow $\mathbb{U}$ is required both in $t$ and in $\mathbb{U}_0$. In view of the issues discussed in the previous section, we develop a generalized theory, where continuity in time is relaxed to the existence of the one--sided limits at $t \pm$, 
while continuity in $\mathbb{U}_0$ is replaced by \emph{measurability} with respect to Borel sets generated by suitable topologies.

\subsection{Trajectory space}

The trajectories space can be defined as
$
\mathcal{T} =  \BV_{\rm loc}([0, \infty); \mathcal{S}).
$

For trajectories we can define the following operations:
\begin{itemize}
\item {\bf Time shift.} For $\xi \in \mathcal{T}$ and $T> 0$ we set
\[
\mathfrak S_T [\xi] (t \pm ) = \xi ((T + t)\pm) \ \mbox{for}\ t \in [0, \infty) 
\]
\item {\bf Continuation.} For $\xi^1, \ \xi^2 \in \mathcal{T}$ and $T>0$ we set
\[
\ \xi^1 \cup_T \xi^2 (t \pm) = \left\{ \begin{array}{l} \xi^1(t \pm ) \ \mbox{if}\ 0 < t < T, \\ \\
\xi^2 ((t - T) \pm ) \ \mbox{for}\ t > T,\end{array}   \right.,\ 
\begin{array}{l}
\xi^1 \cup_T \xi^2 (T -) = \xi^1(T-),\\ \\ \xi^1 \cup_T \xi^2 (T+) = \xi^2(0+).
\end{array}
\]

\end{itemize}

\subsection{Solution space}

The solution space can be loosely described as the family of all solutions of a given system of equations emanating from a fixed initial datum in $\mathcal{S}$.
As such the solution operator $\mathcal{U}$ can be understood as a mapping
\[
\mathcal{U} : \mathcal{S} \to 2^{\mathcal{T}},\ \mathcal{U}[ \mathbb{U}_0 ] \subset \mathcal T, 
\ \mathbb{U} [ \mathbb{U}_0 ] (0-) = \mathbb{U}_0 \ \mbox{for all}\ \mathbb{U} \in \mathcal{U}[ \mathbb{U}_0 ].
\]

\begin{Definition}[Solution operator] \label{dDD2}

We say that a mapping $\mathcal{U}: \mathcal{S} \to 2^{\mathcal{T}}$ is \emph{solution operator} if the following properties are satisfied:
\begin{enumerate}
\item[{\bf [A1]}] {\bf Existence, compactness.}
For each $\mathbb{U}_0 \in \mathcal{S}$ the set
$\mathcal U[\mathbb{U}_0]$ is a non--empty bounded subset of $ \BV_{\rm loc}([0, \infty); \mathcal{S})$, compact in the topology of $L^1_{{\rm loc}}([0, \infty); \mathcal{S})$,
\[
\mathbb{U} [\mathbb{U}_0](0-) = \mathbb{U}_0 \ \mbox{for any}\ \mathbb{U} \in \mathcal{U}[\mathbb{U}_0].
\]
\item[{\bf [A2]}] {\bf Measurability.} The set valued mapping
\[
\mathcal{U}: \mathbb{U}_0 \in \mathcal{S} \mapsto \mathcal{U}[ \mathbb{U}_0] \in 2^\mathcal{T}
\]
is Borel measurable; where $2^\mathcal{T}$ is endowed with the Hausdorff topology defined on compact subsets of the metric space
$L^1_{{\rm loc}}([0, \infty); \mathcal{S})$.

\item[{\bf [A3]}] {\bf Shift invariance.} For any $\mathbb{U} \in \mathcal{U} [ \mathbb{U}_0]$ and any $T > 0$, we have
\[
\mathfrak S_T [\mathbb{U} ] \in \mathcal{U} \left[ \mathbb{U} (T-) \right].
\]
\item[{\bf [A4]}] {\bf Continuation.} For any $\mathbb{U} ^1 \in \mathcal{U}[\mathbb{U}_0]$, $T > 0$, and $\mathbb{U}^2 \in \mathcal{U}[\mathbb{U} ^1(T-)]$, we have
\[
\mathbb{U}^1 \cup_T \mathbb{U}^2 \in \mathcal{U}[\mathbb{U}_0].
\]

\end{enumerate}

\end{Definition}

A version of the following result for the case of $\mathcal{T}$ being a set of continuous trajectories with values in a Polish space was proved by Cardona and Kapitanski \cite{CorKap}. However,  for applications in fluid dynamics, where time continuity of the energy or entropy is not valid, a suitable modification is necessary which was proved in \cite[Section 5]{BreFeiHof19}.

\begin{Proposition} \label{dDP1}

Let $\mathcal{S}$ be a Polish space and let $\mathcal{T} =  \BV_{\rm loc}([0, \infty); \mathcal{S})$.
Let $\mathcal{U}: \mathcal{S} \to 2^\mathcal{T}$, be a solution operator enjoying the properties {\bf [A1]--[A4]} from Definition \ref{dDD2}.

Then $\mathcal{U}$ admits a measurable semiflow selection, specifically, for any $\mathbb{U}_0 \in \mathcal{S}$ there exists a single trajectory
\[
\mathbb{U} \in \mathcal{U}[\mathbb{U}_0],\ \mathbb{U}: \mathcal{S}  \to \mathcal{T}
\subset L^1_{\rm loc}([0, \infty); \mathcal{S})\ \mbox{Borel measurable},
\]
such that 
\[
\mathbb{U}: [t,\mathbb{U}_0] \in [0, \infty) \times  \mathcal{S} \mapsto \mathbb{U}[\mathbb{U}_0](t-),\ t> 0,\  \mathbb{U}(0-) = \mathbb{U}_0
\]
is a semiflow in the sense of Definition \ref{dDD1}.

In addition, if $\beta : \mathcal{S} \to R$ is a bounded continuous function and $\lambda > 0$, the selection can be chosen to satisfy
\begin{equation}\label{eq:1}
\int_0^\infty \exp(- \lambda t) \beta (\mathbb{U}[t; \mathbb{U}_0]) \dt
\leq \int_0^\infty \exp(- \lambda t) \beta (\mathbb{V}(t)) \dt
\ \mbox{for any}\ \mathbb{V} \in \mathcal{U}[\mathbb{U}_0].
\end{equation}

\end{Proposition}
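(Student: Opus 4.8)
The plan is to follow the Krylov-type selection scheme of \cite{KrylNV}, in the deterministic and discontinuous form developed in \cite{CorKap, BreFeiHof19}, selecting one trajectory per initial datum by successively minimizing a countable family of exponentially weighted functionals. Fix a countable set $\{ \beta_j \}_j$ of bounded continuous functions on $\mathcal{S}$ that separates points, together with a countable dense set of positive rationals $\{ \lambda_k \}_k \subset (0, \infty)$, and enumerate the resulting pairs as $(\lambda_i, \beta_i)_{i \in \mathbb{N}}$, arranging that the prescribed pair $(\lambda, \beta)$ from \eqref{eq:1} appears first. For a trajectory $\mathbb{V} \in \mathcal{T}$ put
\[
I_i[\mathbb{V}] = \int_0^\infty \exp(-\lambda_i t)\, \beta_i(\mathbb{V}(t)) \dt .
\]
Since each $\beta_i$ is bounded and continuous and the integrand is dominated by $\| \beta_i \|_\infty \exp(-\lambda_i t)$, convergence in $L^1_{\rm loc}([0, \infty); \mathcal{S})$ yields, along subsequences, a.e. convergence $\mathbb{V}_n(t) \to \mathbb{V}(t)$, so that $I_i$ is a continuous functional on $\mathcal{T}$ with the $L^1_{\rm loc}$ topology.

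Next I would define inductively a nested family of set-valued maps by $\mathcal{U}_0 = \mathcal{U}$ and
\[
\mathcal{U}_i[\mathbb{U}_0] = \Big\{ \mathbb{V} \in \mathcal{U}_{i-1}[\mathbb{U}_0] : I_i[\mathbb{V}] = \min_{\mathbb{W} \in \mathcal{U}_{i-1}[\mathbb{U}_0]} I_i[\mathbb{W}] \Big\}.
\]
The central claim is that each $\mathcal{U}_i$ is again a solution operator in the sense of Definition \ref{dDD2}. The compactness and nonemptiness in [A1] for $\mathcal{U}_i$ follow from continuity of $I_i$ and compactness of $\mathcal{U}_{i-1}[\mathbb{U}_0]$; the measurability [A2] follows because $\mathbb{U}_0 \mapsto \min_{\mathcal{U}_{i-1}[\mathbb{U}_0]} I_i$ is Borel and $\mathcal{U}_i[\mathbb{U}_0]$ is then a measurable compact-valued multifunction, to which the Kuratowski--Ryll-Nardzewski selection machinery applies as in \cite{BreFeiHof19}.

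The decisive point is the preservation of [A3] and [A4], which rests on the dynamic programming identity
\[
I_i[\mathbb{V}] = \int_0^T \exp(-\lambda_i t)\, \beta_i(\mathbb{V}(t)) \dt + \exp(-\lambda_i T)\, I_i\big[ \mathfrak{S}_T[\mathbb{V}] \big]
\]
combined with [A3], [A4] for $\mathcal{U}_{i-1}$. For shift invariance: if $\mathbb{V} \in \mathcal{U}_i[\mathbb{U}_0]$ but $\mathfrak{S}_T[\mathbb{V}]$ failed to minimize $I_i$ over $\mathcal{U}_{i-1}[\mathbb{V}(T-)]$, one could, by continuation, glue a strictly better tail $\mathbb{W}$ onto $\mathbb{V}$ and obtain $\mathbb{V} \cup_T \mathbb{W} \in \mathcal{U}_{i-1}[\mathbb{U}_0]$ with $I_i[\mathbb{V} \cup_T \mathbb{W}] < I_i[\mathbb{V}]$, contradicting minimality; hence $\mathfrak{S}_T[\mathbb{V}] \in \mathcal{U}_i[\mathbb{V}(T-)]$. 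The continuation property [A4] is obtained symmetrically by splitting the integral at $T$ and using that both pieces are separately minimizing.

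Having shown each $\mathcal{U}_i$ to be a solution operator, I would set $\mathcal{U}_\infty[\mathbb{U}_0] = \bigcap_i \mathcal{U}_i[\mathbb{U}_0]$, which as a decreasing intersection of nonempty compact sets is nonempty; injectivity of the Laplace transform, together with the fact that the $\beta_i$ separate points and the $\lambda_i$ are dense, forces $I_i[\mathbb{V}_1] = I_i[\mathbb{V}_2]$ for all $i$ to imply $\mathbb{V}_1 = \mathbb{V}_2$ in $L^1_{\rm loc}$, so $\mathcal{U}_\infty[\mathbb{U}_0]$ is a singleton $\{ \mathbb{U}[\mathbb{U}_0] \}$ depending measurably on $\mathbb{U}_0$. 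The semiflow property is then read off from shift invariance of the limit: $\mathfrak{S}_{t_2}[\mathbb{U}[\mathbb{U}_0]]$ lies in every $\mathcal{U}_i[\mathbb{U}[\mathbb{U}_0](t_2-)]$, hence coincides with the unique element $\mathbb{U}[\mathbb{U}[\mathbb{U}_0](t_2-)]$, which unwinds to $\mathbb{U}[t_1 + t_2; \mathbb{U}_0] = \mathbb{U}[t_1; \mathbb{U}[t_2; \mathbb{U}_0]]$, and taking $(\lambda_1, \beta_1) = (\lambda, \beta)$ gives \eqref{eq:1}. I expect the main obstacle to be exactly the verification that minimization preserves [A3]--[A4] in the discontinuous ($\BV$) setting: one must treat the one-sided limits at the splicing time $T$ with care, ensuring the glued trajectory is admissible and that evaluation at $T\pm$ is compatible with the continuation operation, which is precisely where the relaxation from continuous to $\BV$ trajectories demands the refinements of \cite{BreFeiHof19}.
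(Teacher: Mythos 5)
Your proposal is correct and takes essentially the same approach as the paper, which proves Proposition \ref{dDP1} by deferring to the Krylov-type scheme of \cite[Section 5]{BreFeiHof19}: successive minimization of the Laplace-transform functionals $I_i[\mathbb{V}]=\int_0^\infty \exp(-\lambda_i t)\,\beta_i(\mathbb{V}(t))\,\mathrm{d}t$ over nested compact solution sets, with the dynamic-programming identity preserving {\bf [A3]}--{\bf [A4]}, a countable point-separating family $\{\beta_j\}$ together with Lerch's theorem forcing the intersection to be a singleton, and the prescribed pair $(\lambda,\beta)$ placed first to guarantee \eqref{eq:1}. Your sketch reproduces all of these ingredients, including the gluing contradiction argument and the treatment of one-sided limits in the $\BV$ setting.
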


We recall that the selection procedure of Proposition \ref{dDP1}  relies on a subsequent minimization (or alternatively maximization) of a sequence of suitable continuous functionals, as e.g. the functional in \eqref{eq:1}, over the set of all solutions. The functionals are chosen in a way to separate points of the trajectory space $\mathcal{T}$. For this purpose the Laplace transform with respect to the time variable proved to be beneficial. In general, the selection depends on the particular choice of a sequence $(\lambda_{n})_{n\in\mathbb{N}}$ representing the points where the Laplace transform is evaluated. The functional in \eqref{eq:1} shall then be chosen as the first one to minimize/maximize. In our application to the complete Euler system in the next section we chose \eqref{eq:1} in order to maximize the entropy production rate.

In the remaining part of the paper, we identify a suitable solution operator $\mathcal{U}$ associated to the Euler system that 
complies with the hypotheses of Proposition \ref{dDP1}.  In Section \ref{D}, we present further discussion on the question of optimality of the choice of the functionals and in particular the relation to the Dafermos criterion discussed in the Introduction.

\section{Dissipative solutions}
\label{W}

Our first goal in this section is to identify a suitable class of generalized solutions to the Euler problem \eqref{I1}--\eqref{I5}.
We start by introducing the basic state variables, then we recall the (slightly modified) notion of dissipative measure--valued solutions from \cite{BreFei17}. This leads us to our notion of dissipative solution which we introduce in Section \ref{DDD}. Afterwards we study stability of dissipative solutions as well as their existence. The semiflow is then constructed in Section \ref{S}.

\subsection{Phase space}

In what follows, we plan to work with the state variables $\vr$, $\vm$, and the total entropy $S = \vr s$. In accordance with 
hypotheses \eqref{I2}, \eqref{I3}, the
pressure $p$ and the internal energy $e$ can be written in the form
\[
p = p(\vr, S) = \vr^\gamma \exp\left( \frac{S}{c_v \vr} \right),\ e=e(\vr,S) = \frac{1}{\gamma - 1} \vr^{\gamma-1} \exp\left( \frac{S}{c_v \vr} \right).
\]

\begin{Lemma} \label{LC1}
The mapping
\[
(\vr, S) \mapsto p(\vr, S),\ \vr > 0, \ S \in R,
\]
is strictly convex.

\end{Lemma}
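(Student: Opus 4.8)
The plan is to prove strict convexity by showing that the Hessian of $p$ is positive definite at every point of the open half-plane $\{\vr > 0,\ S \in R\}$. Since this domain is convex and $p$ is smooth on it, a positive definite Hessian everywhere implies strict convexity. So the whole statement reduces to a Hessian computation, and the real work is organizing that computation so the positivity is transparent.

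First I would introduce the auxiliary variable $u = \frac{S}{c_v \vr}$, so that $p = \vr^\gamma e^u$, and record the elementary relations $u_\vr = -\,u/\vr$ and $u_S = \frac{1}{c_v \vr}$. Keeping the exponential factor $e^u$ intact throughout the differentiation makes the bookkeeping manageable. A direct computation then gives the first derivatives
\[
p_\vr = \vr^{\gamma-1} e^u (\gamma - u), \qquad p_S = \frac{\vr^{\gamma-1}}{c_v}\, e^u .
\]

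Next I would compute the three second derivatives. The purely entropic one is immediate, $p_{SS} = \frac{\vr^{\gamma-2}}{c_v^2}\, e^u > 0$, which already settles one diagonal entry and will serve as the leading principal minor. A slightly longer differentiation yields the mixed derivative and the density derivative
\[
p_{\vr S} = \frac{\vr^{\gamma-2}}{c_v}\, e^u\, (\gamma - 1 - u), \qquad
p_{\vr\vr} = \vr^{\gamma-2} e^u \left[ u^2 - 2(\gamma-1)u + \gamma^2 - \gamma \right],
\]
where in computing $p_{\vr\vr}$ one substitutes $u_\vr = -u/\vr$ to collect everything into a single quadratic in $u$.

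The decisive step is the evaluation of the Hessian determinant, and this is the only place where genuine care is needed: the apparent $u$-dependence must cancel completely. Expanding $p_{\vr\vr}\,p_{SS} - p_{\vr S}^2$ and using $(\gamma-1-u)^2 = u^2 - 2(\gamma-1)u + (\gamma-1)^2$, the quadratic terms in $u$ cancel identically, leaving
\[
p_{\vr\vr}\,p_{SS} - p_{\vr S}^2
= \frac{\vr^{2\gamma-4}}{c_v^2}\, e^{2u}\left[ \gamma^2 - \gamma - (\gamma-1)^2 \right]
= \frac{\vr^{2\gamma-4}}{c_v^2}\, e^{2u}\,(\gamma - 1).
\]
Since $\gamma > 1$, both this determinant and $p_{SS}$ are strictly positive for all $\vr > 0$ and $S \in R$, so the Hessian is positive definite everywhere, and strict convexity follows. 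The substitution $u = S/(c_v \vr)$ is exactly what renders the cancellation to the constant $\gamma - 1$ visible; without it the determinant looks far messier than it is.
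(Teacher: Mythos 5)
Your proof is correct and follows essentially the same route as the paper: a direct computation of the Hessian of $p(\vr,S)=\vr^\gamma \exp\left(\frac{S}{c_v\vr}\right)$ followed by verification of positive definiteness (you via Sylvester's criterion with $p_{SS}>0$ and positive determinant, the paper via positive trace and positive determinant). Your substitution $u = \frac{S}{c_v\vr}$ is only an organizational device, and in fact your determinant $\frac{\vr^{2\gamma-4}}{c_v^2}e^{2u}(\gamma-1)$ is the correct value; the paper's displayed determinant $\frac{\vr^\gamma}{c_v^2}\exp\left(\frac{S}{c_v\vr}\right)$ contains harmless typographical slips (a missing factor $\gamma-1$ and an unsquared prefactor), which does not affect its conclusion.
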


\begin{proof}

This is a matter of direct computation of the Hessian matrix. We have
\[
\begin{split}
\frac{\partial p(\vr, S)}{\partial \vr} &= \gamma \vr^{\gamma - 1} \exp\left( \frac{S}{c_v \vr} \right)
- \frac{S}{c_v }\vr^{\gamma - 2} \exp\left( \frac{S}{c_v \vr} \right),\\
\frac{\partial p(\vr, S)}{\partial S} &= \frac{1}{c_v} \vr^{\gamma - 1} \exp\left( \frac{S}{c_v \vr} \right),
\end{split}
\]
and
\[
\begin{split}
\frac{\partial^2 p(\vr, S)}{\partial \vr^2} &= \left[ (\gamma-1)\vr^2 + \left( (\gamma -1) \vr -
\frac{S}{c_v} \right)^2 \right] \vr^{\gamma - 4} \exp\left( \frac{S}{c_v \vr} \right), \\
\frac{\partial^2 p(\vr, S)}{\partial S^2} &= \frac{1}{c^2_v} \vr^{\gamma - 2} \exp\left( \frac{S}{c_v \vr} \right)
= \frac{1}{c^2_v} \vr^{2} \vr^{\gamma - 4} \exp\left( \frac{S}{c_v \vr} \right)
,\\
\frac{\partial^2 p(\vr, S)}{\partial \vr \partial S} &=
\frac{1}{c_v} (\gamma - 1) \vr^{\gamma - 2} \exp\left( \frac{S}{c_v \vr} \right)  - \frac{S}{c_v^2} \vr^{\gamma - 3 } \exp\left( \frac{S}{c_v \vr} \right) \\&= \left[\frac{1}{c_v} (\gamma - 1) \vr^{2}  - \frac{S}{c_v^2} \vr               \right] \vr^{\gamma - 4} \exp\left( \frac{S}{c_v \vr} \right).
\end{split}
\]

Obviously the Hessian matrix has positive trace, while its determinant reads
\[
\begin{split}
&\vr^{\gamma - 4} \exp\left( \frac{S}{c_v \vr} \right) \left\{
\frac{\vr^2}{c_v^2} \left[ (\gamma-1)\vr^2 + \left( (\gamma -1) \vr -
\frac{S}{c_v} \right)^2 \right] - \left( \frac{\gamma - 1}{c_v}  \vr^{2}  - \frac{S}{c_v^2} \vr  \right)^2 \right\}\\
&\qquad= \frac{\vr^{\gamma}}{c_v^2} \exp\left( \frac{S}{c_v \vr} \right).
\end{split}
\]
which completes the proof.
\end{proof}

Consequently, we may define 
\[
p(\vr, S) = (\gamma - 1) \vr e(\vr, S) = \left\{ \begin{array}{l} \vr^\gamma \exp \left( \frac{S}{c_v \vr} \right) 
\ \mbox{if}\ \vr > 0,\ S \in R ,\\ \\ 0 \ \mbox{if}\ \vr = 0, \ S \leq 0, \\ \\ 
\infty \ \mbox{if} \ \vr =0,\ S > 0,  \end{array}  \right. 
\]
which is a convex lower semi--continuous function on $[0, \infty) \times R$. 

Similarly, we define the kinetic energy 
\[
\frac{|\vm|^2}{\vr} = \left\{ \begin{array}{l} \frac{|\vm|^2}{\vr} \ \mbox{if}\ \vr > 0,\ \vm \in R^N,\\ \\
0 \ \mbox{if}\ \vm = 0,\\ \\ \infty \ \mbox{if}\ \vr = 0, \vm \ne 0,
\end{array} \right.
\]
which is  a convex lower semi--continuous function on $[0, \infty) \times R^N$. We conclude that the total energy 
\[
\E = \E(\vr, \vm, S) = \frac{1}{2} \frac{|\vm|^2}{\vr} + \vr e(\vr, S)
\]
may be viewed as a convex lower semi--continuous function of $\vr \geq 0$, $\vm \in R^N$, and $S \in R$. 

\subsection{Dissipative measure--valued solutions}

Following \cite{BreFei17} we introduce the concept of dissipative measure--valued solution to the Euler problem 
\eqref{I1}--\eqref{I5}. In addition, similarly to \cite{BreFeiHof19}, we refine the definition of the measures 
describing the concentration defect. The reader may consult \cite{BreFei17} for the physical background and 
mathematical objects like Young measures used in what follows.  

We start by introducing the state space of ``dummy variables'': 
\[
\mathcal{Q}= \left\{ [\tilde \vr, \tilde \vm, \tilde S] \ \Big|\ \tilde \vr \geq 0, \ \tilde \vm \in R^N, \
\tilde S \in R \right\}.
\]
By $\mathcal{P}(\mathcal{Q})$ we denote the set of probability measures on $\mathcal{Q}$, whereas $\mathcal{M}^{+}(\Ov{\Omega})$ and $\mathcal{M}^{+}(S^{N-1}\times\Ov{\Omega})$ denotes the set of positive bounded Radon measures on $\Ov{\Omega}$ and $S^{N-1}\times\Ov{\Omega}$, respectively, where $S^{N-1}\subset R^{N}$ is the unit sphere.

A \emph{dissipative measure--valued solution} of the Euler system \eqref{I1}--\eqref{I4} with the initial data 
\[
[\vr_0, \vm_0, S_0] \ \mbox{and the total energy} \ E_0
\]
consists of the following objects:
\begin{itemize}
\item a parametrized family of probability measures 
\[
\mathcal{V}_{t,x}: (t,x) \in (0, \infty) \times \Omega \mapsto \mathcal{P}(\mathcal{Q}),
\ \mathcal{V} \in L^\infty_{{\rm weak-(*)}} ((0,T) \times \Omega; \mathcal{P}(\mathcal{Q}));
\]
\item kinetic and internal energy concentration defect measures 
\[
\mathfrak{C}_{{\rm kin}},\ \mathfrak{C}_{{\rm int}} \in L^\infty_{{\rm weak-(*)}}(0, \infty; \mathcal{M}^+ (\Ov{\Omega}));
\]
\item convective concentration defect measure 
\[
\mathfrak{C}_{\rm conv} \in L^\infty_{\rm weak- (*)}(0, \infty; \mathcal{M}^+ (S^{N-1} \times \Ov{\Omega})),\
\frac{1}{2}\int_{S^{N-1}} \D \mathfrak{C}_{\rm conv} = \mathfrak{C}_{\rm kin}.
\]

\end{itemize}

The Euler equations are satisfied in the following sense:

\begin{itemize}

\item \emph{Total energy} is a constant of motion: 
\begin{equation} \label{D2}
\intO{ \left< \mathcal{V}_{t,x}; \frac{1}{2} \frac{|\tilde \vm|^2}{\tilde \vr} + c_v {\tilde \vr}^\gamma \exp \left( \frac{\tilde S}{c_v \tilde \vr} \right) \right>}
+ \int_{\Ov{\Omega}} (\D \mathfrak{C}_{\rm kin}(t) + \D \mathfrak{C}_{\rm int}(t)) = E_0
\ \mbox{for a.a.}\ t \geq 0.
\end{equation}

\item \emph{Mass conservation} or continuity equation $\mbox{\eqref{I1}}_1$ reads 
\begin{equation} \label{D3}
\int_0^\infty \intO{ \left[ \left< \mathcal{V}_{t,x}; \tilde {\vr} \right>
\partial_t \varphi + \left< \mathcal{V}_{t,x}; \tilde {\vm} \right> \cdot \Grad \varphi \right] }\dt
= - \intO{ \vr_0 \varphi (0)}
\end{equation}
for any $\varphi \in C^1_c([0, \infty) \times \Ov{\Omega})$.

\item \emph{Momentum balance} (equation $\mbox{\eqref{I1}}_2$) reads
\begin{equation} \label{D4}
\begin{split}
&\int_0^\infty \intO{ \left[ \left< \mathcal{V}_{t,x}; \tilde {\vm} \right> \cdot
\partial_t \bfphi + \left< \mathcal{V}_{t,x}; \frac{\tilde {\vm} \otimes \tilde{\vm} }{\tilde{\vr}} \right> : \Grad \bfphi
+ \left< \mathcal{V}_{t,x}; p(\tilde{\vr}, \tilde{S}) \right> \Div \bfphi
\right] }\dt\\
&+ \int_0^\infty \left[ \int_{\Ov{\Omega}} \int_{S^{N-1}} (\xi \otimes \xi): \Grad \bfphi \ \D \mathfrak{C}_{\rm conv}(t) \right] \dt  +
(\gamma - 1) \int_0^\infty \left[ \int_{\Ov{\Omega}} \Div \bfphi \ \D \mathfrak{C}_{\rm int}(t) \right] \dt \\ 
&= - \intO{ \vm_0 \bfphi (0) } 
\end{split}
\end{equation}
for any $\bfphi \in C^1_c([0, \infty)\times\overline\Omega;R^N)$, $\bfphi \cdot \vc{n}|_{\partial \Omega} = 0$.

\item \emph{Entropy balance} (inequality \eqref{I7}) is rewritten in the renormalized form:
\begin{equation}\label{D5}
\begin{split}
\int_0^\infty &\intO{ \left[ \left< \mathcal{V}_{t,x}; \tilde \vr Z \left( \frac{\tilde S}{\tilde \vr} \right) \right> \partial_t \varphi +
\left< \mathcal{V}_{t,x}; Z \left( \frac{\tilde S}{\tilde \vr} \right) \tilde \vm \right> \cdot \Grad \varphi  \right]}\dt
\\
&\leq - \intO{ \vr_0 Z \left( \frac{S_0}{\vr_0} \right) \varphi (0) } 
\end{split}
\end{equation}
for any $\varphi \in C^1_c([0, \infty) \times \Ov{\Omega})$, $\varphi \geq 0$, and any $Z$,
\[
Z \in BC(R)\ \mbox{non--decreasing}.
\]
\end{itemize}

It follows from \eqref{D3}, \eqref{D4} that 
\[
\begin{split}
t &\mapsto \intO{ \left< \mathcal{V}_{t,x}; \tilde \vr \right> \varphi}, \ \varphi \in C^1(\Ov{\Omega}), \\ 
t &\mapsto \intO{ \left< \mathcal{V}_{t,x}; \tilde \vm \right> \bfphi}, \ \bfphi \in C^{1}(\Ov{\Omega};R^N),
\ \bfphi \cdot \vc{n}|_{\partial \Omega} = 0,
\end{split}
\]
are continuous functions of time. Accordingly, equations \eqref{D3}, \eqref{D4} can be written as 
\begin{align} \label{D3a}
\int_{\tau_1}^{\tau_2} \intO{ \left[ \left< \mathcal{V}_{t,x}; \tilde {\vr} \right>
\partial_t \varphi + \left< \mathcal{V}_{t,x}; \tilde {\vm} \right> \cdot \Grad \varphi \right] }\dt
= \left[ \intO{ \left< \mathcal{V}_{t,x}; \tilde {\vr} \right> \varphi } \right]_{t = \tau_1}^{t=\tau_2},
\end{align}
for any $0 \leq \tau_1 < \tau_2 < \infty$, and
any $\varphi \in C^1_c([0, \infty) \times \Ov{\Omega})$, where $\left< \mathcal{V}_{0,x}; \tilde {\vr} \right> = \vr_0(x)$;
\begin{equation} \label{D4a}
\begin{split}
&\int_{\tau_1}^{\tau_2} \intO{ \left[ \left< \mathcal{V}_{t,x}; \tilde {\vm} \right> \cdot
\partial_t \bfphi + \left< \mathcal{V}_{t,x}; \frac{\tilde {\vm} \otimes \tilde{\vm} }{\tilde{\vr}} \right> : \Grad \bfphi
+ \left< \mathcal{V}_{t,x}; p(\tilde{\vr}, \tilde{S}) \right> \Div \bfphi
\right] }\dt\\
&+ \int_{\tau_1}^{\tau_2} \left[ \int_{\Ov{\Omega}} \int_{S^{N-1}} (\xi \otimes \xi): \Grad \bfphi \ \D \mathfrak{C}_{\rm conv}(t) 
\right] \dt +
(\gamma - 1) \int_{\tau_1}^{\tau_2} \left[ \int_{\Ov{\Omega}} \Div \bfphi \ \D \mathfrak{C}_{\rm int}(t) \right] \dt \\ 
&= \left[ \intO{ \left< \mathcal{V}_{t,x} ;
\tilde \vm \right> \bfphi }  \right]_{t=\tau_1}^{t= \tau_2}, \ \left< \mathcal{V}_{0,x} ;
\tilde \vm \right> = \vm_0(x)
\end{split}
\end{equation}
for any $0 \leq \tau_1 < \tau_2 < \infty$,
and any $\bfphi \in C^1_c([0, \infty)\times\Ov\Omega; R^N)$, $\bfphi \cdot \vc{n}|_{\partial \Omega} = 0$.

As shown in \cite[Section 2.1.1]{BreFei17}, the renormalized entropy inequality \eqref{D5} implies that 
\begin{equation} \label{D5b}
\mathcal{V}_{t,x} \left\{ (\vr, S) \ \Big| \ {S} \geq s_0 \vr > - \infty \right\} = 1 
\ \mbox{for a.a.} \ (t,x)
\end{equation}
as soon as 
\[
{S_0} \geq \vr_0 s_0 \ \mbox{a.a. in}\ \Omega.
\]
This is the minimum principle for the entropy $s = \frac{S}{\vr} \geq s_0$. From now on, we fix $s_0$ and consider only solutions 
satisfying \eqref{D5b}. This corresponds to having a new entropy $s - s_0 \geq 0$. Then one can perform the limit passage 
$Z(s) \nearrow s$ in \eqref{D5} obtaining the entropy balance 
\begin{equation}\label{D5a}
\begin{split}
\int_0^\infty &\intO{ \left[ \left< \mathcal{V}_{t,x}; \tilde S \right> \partial_t \varphi +
\left< \mathcal{V}_{t,x}; \tilde  S \frac{\tilde \vm}{\tilde \vr} \right> \cdot \Grad \varphi  \right]}\dt
\leq - \intO{ S_0 \varphi (0) } 
\end{split}
\end{equation}
cf. \cite{BreFei17B}. In particular, 
\[
t \mapsto \intO{ \left< \mathcal{V}_{t,x}; \tilde S \right> \varphi } = \chi^1_\varphi (t) + \chi^2_\varphi (t) ,\ 
\varphi \in C^1 (\Ov{\Omega}), \ \varphi \geq 0, 
\]
where $\chi^1$ is continuous and $\chi^2$ non--decreasing. Thus \eqref{D5a} can be rewritten in the form 
\begin{equation}\label{D5c}
\begin{split}
\int_{\tau_1}^{\tau_2} \intO{ \left[ \left< \mathcal{V}_{t,x}; {\tilde S} \right> \partial_t \varphi +
\left< \mathcal{V}_{t,x}; \tilde{S} \frac{\tilde \vm }{\tilde \vr} \right> \cdot \Grad \varphi  \right]}
\leq \left[ \intO{ \left< \mathcal{V}_{t,x}; {\tilde S} \right> \varphi  } \right]_{t = \tau_1-}^{t = \tau_2+}, 
\end{split}
\end{equation}
for any $0 \leq \tau_1 < \tau_2 < \infty$,
and any $\varphi \in C^1([0, \infty) \times \Ov{\Omega})$, $\varphi \geq 0$, where $\left< \mathcal{V}_{0-,x}; {\tilde S} \right> = S_0(x)$.

Now we have all in hand to formulate the definition of dissipative measure--valued solution.

\begin{Definition}[Dissipative measure--valued solution] \label{DD1}

A \emph{dissipative measure--valued solution} of the Euler system \eqref{I1}--\eqref{I4}
with the initial data 
\[
[\vr_0, \vm_0, S_0] \ \mbox{and the energy} \ E_0
\]
is a parameterized family of probability measures
\[ 
\mathcal{V}_{t,x}: (t,x) \in (0, \infty) \times \Omega \mapsto \mathcal{P}(\mathcal{Q}),
\ \mathcal{V} \in L^\infty_{{\rm weak-(*)}} ((0,T) \times \Omega; \mathcal{P}(\mathcal{Q})),
\]
together with
the energy concentration defect measures 
\[
\mathfrak{C}_{{\rm kin}},\ \mathfrak{C}_{{\rm int}} \in L^\infty_{{\rm weak-(*)}}(0, \infty; \mathcal{M}^+ (\Ov{\Omega})),
\]
and the convection concentration defect measure 
\[
\mathfrak{C}_{\rm conv} \in L^\infty_{\rm weak- (*)}(0, \infty; \mathcal{M}^+ (S^{N-1} \times \Ov{\Omega})),\
\frac{1}{2}\int_{S^{N-1}} \D \mathfrak{C}_{\rm conv} = \mathfrak{C}_{\rm kin},
\]
satisfying the integral identities \eqref{D2}, \eqref{D3a}, \eqref{D4a}, and \eqref{D5c}. 

\end{Definition}

Next, we list certain bounds that can be derived 
from hypothesis \eqref{D5b} and the energy equality \eqref{D2}.
As the entropy is bounded below by $s_0$, 
we deduce from \eqref{D2} that
\begin{equation} \label{D6}
\intO{ \left< \mathcal{V}_{t,x}; {\tilde \vr}^\gamma \right> } \aleq E_0 \ \mbox{for a.a.}\ t > 0.
\end{equation}

Similarly, as
\[
|\tilde{\vm}|^{\frac{2 \gamma}{\gamma + 1}} = |\tilde{\vr}|^{\frac{\gamma}{\gamma + 1}} \left| \frac{\tilde{\vm}}{\sqrt{\tilde \vr}} \right|^{\frac{2 \gamma}{\gamma + 1}}
\aleq \tvr^\gamma + \frac{ |\tilde \vm|^2}{\tvr},
\]
we conclude
\begin{equation} \label{D7}
\intO{ \left< \mathcal{V}_{t,x}; |\tilde{\vm}|^{\frac{2 \gamma}{\gamma + 1}}  \right> } \aleq E_0 \ \mbox{for a.a.}\ t > 0.
\end{equation}

Finally, we derive some bounds on the total entropy $S$. Recall that $\tilde S \geq s_0 \tvr$; whence
\[
|\tilde S| = - \tilde S \leq - s_0 \tvr\ \mbox{whenever}\ \tilde S \leq 0.
\]
If $\tilde S > 0$, we compute
\[
\tvr^\gamma \exp \left( \frac{\tilde S}{c_v \tvr} \right) = c_v^{-\gamma} \frac { \exp \left( \frac{\tilde S}{c_v \tvr} \right) }
{ \left( \frac{\tilde S}{c_v \tvr} \right)^\gamma } {\tilde S}^\gamma \ageq {\tilde S}^\gamma.
\]
Consequently,
\begin{equation} \label{D8}
\intO{ \left< \mathcal{V}_{t,x}; |\tilde S|^\gamma  \right> } \aleq E_0 \ \mbox{for a.a.}\ t > 0.
\end{equation}
Next, we estimate the quantity $\tilde S/ \sqrt{\tvr}$. If $\tilde S \leq 0$, we get, repeating the above argument,
\[
\left| \frac{ \tilde S }{\sqrt{\tvr}} \right| \leq - s_0 \sqrt{\tvr}
\ \mbox{for} \ \tilde S \leq 0.
\]
If $\tilde S > 0$, we write
\[
\tvr^\gamma \exp \left( \frac{\tilde S}{c_v \tvr} \right) =
\tvr^\gamma \exp \left( {\frac{\tilde S}{\sqrt{\tvr}}} \frac{1}{c_v \sqrt {\tvr} } \right) =
c_v^{-2\gamma} \frac{\exp \left( {\frac{\tilde S}{\sqrt{\tvr}}} \frac{1}{c_v \sqrt {\tvr} } \right)   }{ \left( { {\frac{\tilde S}{\sqrt{\tvr}}} }\frac{1} { c_v {\sqrt {\tvr}} }
\right)^{2 \gamma} } \left(\frac{\tilde S}{\sqrt{\tvr}}\right)^{2 \gamma} \ageq \left(\frac{\tilde S}{\sqrt{\tvr}}\right)^{2 \gamma}.
\]
We may therefore infer that
\begin{equation} \label{D9}
\intO{ \left< \mathcal{V}_{t,x}; \left| \frac{\tilde S}{\sqrt{\tvr}} \right|^{2\gamma}  \right> } \aleq E_{0} \ \mbox{for a.a.}\ t > 0.
\end{equation}

Finally, we recall the \emph{weak--strong} uniqueness principle proved in \cite[Theorem 3.3]{BreFei17}.

\begin{Proposition} \label{DPP2}

Let $\Omega \subset R^N$, $N=1,2,3$ be a bounded domain with smooth boundary. Suppose that the Euler system \eqref{I1}--\eqref{I5}
admits a classical solution $[\vr, \vm, \E]$ in the class 
\begin{equation} \label{class}
\vr,\ \E \in C([0,T); W^{3,2}(\Omega)),\ \vm \in C([0,T); W^{3,2}(\Omega; R^N))
\end{equation} 
with the initial data 
\[
\vr_0 > 0,\ \vm_0, \ \E_0 = \frac{1}{2} \frac{|\vm_0|^2}{\vr_0} + c_v \vr_0 \vt_0, \ \vt_0 > 0.
\]

Let $\mathcal{V}_{t,x}$ be a dissipative measure valued solution specified in Definition \ref{DD1} starting from the data 
\[
\vr_0,\ \vm_0, \ S_0 = \vr_0 s(\vr_0, \vt_0),\ E_0 = \intO{ \E_0 }.
\]   

Then 
\[
\mathfrak{C}_{\rm kin}|_{[0,T) \times \Ov{\Omega}} =
\mathfrak{C}_{\rm int}|_{[0,T) \times \Ov{\Omega}}=0,\   \mathfrak{C}_{\rm conv}|_{ [0,T) \times S^{N-1} \times \Ov{\Omega}} = 0,
\]
and
\[
\mathcal{V}_{t,x} = \delta_{[\vr(t,x), \vm(t,x), S(t,x)]}
\]
for a.a. $(t,x) \in [0,T) \times \Omega$. 

\end{Proposition}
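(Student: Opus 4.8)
\emph{Proof strategy (relative energy method).} The plan is to prove weak--strong uniqueness by a relative energy (Bregman distance) argument, exploiting the convexity of the total energy $\E=\E(\vr,\vm,S)$ established in Lemma \ref{LC1} and the subsequent discussion. Throughout, let $[\vr,\vm,S]$ denote the classical solution, with velocity $\vu=\vm/\vr$ and temperature $\vt$, and let $[\tvr,\tvm,\tilde S]$ be the dummy variables of the Young measure. Since $\E$ is a convex lower semicontinuous function on $[0,\infty)\times R^N\times R$, it generates the nonnegative Bregman distance
\[
E_{\rm rel}\big(\tvr,\tvm,\tilde S\,\big|\,\vr,\vm,S\big)=\E(\tvr,\tvm,\tilde S)-\frac{\partial\E}{\partial(\vr,\vm,S)}(\vr,\vm,S)\cdot(\tvr-\vr,\tvm-\vm,\tilde S-S)-\E(\vr,\vm,S),
\]
which is $\geq 0$ and vanishes precisely at $[\tvr,\tvm,\tilde S]=[\vr,\vm,S]$. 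I would then introduce the total relative energy, incorporating the concentration defects,
\[
\mathcal{R}(\tau)=\intO{\avo{\Nu;E_{\rm rel}\big(\tvr,\tvm,\tilde S\,\big|\,\vr,\vm,S\big)}}+\int_{\Ov{\Omega}}\big(\D\mathfrak{C}_{\rm kin}(\tau)+\D\mathfrak{C}_{\rm int}(\tau)\big),
\]
a sum of nonnegative quantities (everything evaluated at $t=\tau$). Since the barycenters of $\Nu|_{t=0}$ equal the deterministic data $[\vr_0,\vm_0,S_0]$ and the total energies match, $E_0=\intO{\E_0}$, Jensen's inequality applied to the convex $\E$ together with $\mathfrak{C}_{\rm kin},\mathfrak{C}_{\rm int}\geq0$ forces $\Nu|_{t=0}=\delta_{[\vr_0,\vm_0,S_0]}$ and the vanishing of the initial defects, so that $\mathcal{R}(0)=0$.

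The heart of the matter is a relative energy inequality of Gronwall type,
\[
\mathcal{R}(\tau)\leq\mathcal{R}(0)+\int_0^\tau\mathfrak{R}(t)\,\dt,\qquad |\mathfrak{R}(t)|\aleq\big(1+\|(\vr,\vu,\vt)(t)\|_{C^1}\big)\,\mathcal{R}(t),
\]
valid for a.a. $\tau\in[0,T)$. To derive it, I would treat the quadratic part $\avo{\Nu;\E}$ together with the defects via the energy equality \eqref{D2}, and expand the linear term $\frac{\partial\E}{\partial(\vr,\vm,S)}\cdot(\cdots)$ using the weak formulations \eqref{D3a}, \eqref{D4a}, \eqref{D5c} \emph{tested against the classical fields}. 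Concretely, one uses that $\partial_\vm\E=\vu$ and $\partial_S\E=\vt$ (and $\partial_\vr\E$ a smooth scalar), so that testing \eqref{D3a} with a function built from $\partial_\vr\E$, \eqref{D4a} with $\vu$, and \eqref{D5c} with $\vt$, and invoking that $[\vr,\vm,S]$ solves \eqref{I1} classically to convert $\partial_t$ of the test fields into spatial fluxes, makes the convex ``diagonal'' contributions reassemble into $\mathcal{R}$; the remaining terms are quadratic in the differences and are dominated by the $C^1$ norm of the classical solution times $\mathcal{R}$. Because $\vr,\vm,\E\in C([0,T);W^{3,2}(\Omega))$ and $N\leq3$, the Sobolev embedding $W^{3,2}(\Omega)\hookrightarrow C^1(\Ov{\Omega})$ guarantees that $\vr,\vm,\vu,\vt$ are admissible $C^1$ test functions with finite norms on $[0,T_0]$ for each $T_0<T$.

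A Gronwall argument then yields $\mathcal{R}(\tau)=0$ for a.a. $\tau\in[0,T)$. As $\mathcal{R}$ is a sum of the nonnegative Bregman term and the nonnegative defect masses, each must vanish: thus $\mathfrak{C}_{\rm kin}=\mathfrak{C}_{\rm int}=0$ on $[0,T)\times\Ov{\Omega}$, and the compatibility relation $\tfrac12\int_{S^{N-1}}\D\mathfrak{C}_{\rm conv}=\mathfrak{C}_{\rm kin}$ with $\mathfrak{C}_{\rm conv}\geq0$ forces $\mathfrak{C}_{\rm conv}=0$ as well. Finally, vanishing of $\intO{\avo{\Nu;E_{\rm rel}}}$ with nonnegative integrand means $\Nu$ is supported where $E_{\rm rel}=0$; by strict convexity of $\E$ at the interior point $[\vr,\vm,S]$ (note $\vr\geq\underline{\vr}>0$ and $\vt>0$ along the classical solution), this gives $\Nu=\delta_{[\vr(t,x),\vm(t,x),S(t,x)]}$ for a.a. $(t,x)\in[0,T)\times\Omega$, as claimed.

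The main obstacle is the bookkeeping of the concentration defects inside the relative energy inequality, which is exactly the reason for the refinement of the defect measures recalled before Definition \ref{DD1}. The convective flux in \eqref{D4a} carries the measure $\mathfrak{C}_{\rm conv}$ on $S^{N-1}\times\Ov{\Omega}$ rather than a plain matrix-valued measure, and one must check that, when tested with $\Grad\vu$, its contribution together with the internal-energy defect $(\gamma-1)\mathfrak{C}_{\rm int}$ recombines --- via $\tfrac12\int_{S^{N-1}}\D\mathfrak{C}_{\rm conv}=\mathfrak{C}_{\rm kin}$ --- into terms that are either absorbed into the nonnegative defect part of $\mathcal{R}$ or dominated by $\|\Grad\vu\|_{C^0}\,\mathcal{R}$. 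The entropy, entering only as the inequality \eqref{D5c}, requires a parallel check that the positive temperature $\vt$ multiplies the entropy contribution with the sign that keeps the estimate pointing in the correct direction.
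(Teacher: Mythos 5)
The paper does not prove Proposition \ref{DPP2} itself; it recalls it from \cite[Theorem 3.3]{BreFei17}, and the argument given there is precisely the relative-energy (Bregman-distance) method you outline: a Gronwall estimate for the relative energy built from the convex total energy in the variables $(\vr,\vm,S)$, with the classical fields $\vu$, $\vt$ used as test functions in \eqref{D3a}--\eqref{D5c}, the concentration defects absorbed into the nonnegative part, and the conclusion $\mathfrak{C}_{\rm conv}=0$ and $\mathcal{V}_{t,x}=\delta_{[\vr,\vm,S]}$ recovered from strict convexity and the constraint $\tfrac12\int_{S^{N-1}}\D\mathfrak{C}_{\rm conv}=\mathfrak{C}_{\rm kin}$. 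Your sketch, including the key sign observation that the positive temperature tested in the entropy inequality keeps the estimate pointing the right way, is essentially the same proof.
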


Note that \emph{existence} of a local--in--time classical solution in the class \eqref{class} was established by Schochet \cite{SCHO1}.

\subsection{Dissipative solution}\label{DDD}

Having collected the necessary preliminary material, we are ready to introduce the central object of the present paper - 
the \emph{dissipative} solutions to the Euler system.

\begin{Definition}[Dissipative solution] \label{DDS1}

The quantity $([\vr, \vm, S], E_0)$, where
\begin{align} \label{weak}
\begin{aligned}
&\vr \in C_{{\rm weak, loc}} ([0, \infty); L^\gamma(\Omega)),\
\vm \in C_{{\rm weak, loc}} ([0, \infty); L^{\frac{2 \gamma}{\gamma + 1}}(\Omega; R^N)),\\
& S \in L^\infty(0,\infty;L^\gamma(\Omega))\cap\BV_{{\rm weak, loc}} ([0, \infty); W^{-\ell,2}(\Omega)),\ \ell>\frac{N}{2}+1,
\end{aligned}
\end{align}
is a \emph{dissipative solution} of the Euler system \eqref{I1}
with the initial data
\[
([\vr_0, \ \vm_0,\ S_0], E_0)\in L^\gamma(\Omega)\times L^{\frac{2\gamma}{\gamma+1}}(\Omega,R^N)\times L^\gamma(\Omega)\times[0,\infty)
\]
if there exists a dissipative measure--valued solution $\mathcal{V}_{t,x}$ as specified in Definition \ref{DD1} such that
\[
\vr(t,x) = \left< \mathcal{V}_{t,x}; \tvr \right>,\
\vm (t,x) = \left< \mathcal{V}_{t,x}; \tilde \vm \right>, \
S(t,x) = \left< \mathcal{V}_{t,x}; \tilde S \right>.
\]

\end{Definition}

\begin{Remark} \label{RRR2}

In accordance with the bounds \eqref{D2}, \eqref{D6}, \eqref{D7}, and \eqref{D8}, any dissipative solution belongs to 
the class 
\[
\vr \in L^\infty ([0, \infty); L^\gamma(\Omega)),\
\vm \in L^\infty ([0, \infty); L^{\frac{2 \gamma}{\gamma + 1}}(\Omega; R^N)),\
S \in L^\infty ([0, \infty); L^\gamma(\Omega)); 
\] 
whence to bounded balls in the afore mentioned spaces. These are compact metric (Polish) spaces with respect to the weak topology.  
In particular, condition \eqref{weak} reduces to 
\begin{equation} \label{weak1}
\begin{split}
t &\mapsto \intO{ \vr \varphi },\ t \mapsto \intO{ \vm \cdot \bfphi} \in C_{\rm loc}[0, \infty) 
\ \mbox{for any}\ \varphi \in \DC(\Omega),\ \bfphi \in \DC(\Omega; R^N),\\
t &\mapsto \intO{ S \varphi } \in \BV_{{\rm loc}}[0, \infty) \ \mbox{for any}\ \varphi \in \DC(\Omega).
\end{split}
\end{equation}

\end{Remark}

Finally, we introduce a subclass of dissipative solutions that reflect the physical principle of maximal dissipation defined via the entropy production rate
\begin{align*}
\sigma(\tau)=\int_\Omega \big(S(\tau)-S_0\big)\dx
\end{align*}
discussed in Section \ref{SS1}. 
Let $([\vr^i, \vm^i,S^i], E_{0})$, $i=1,2$, be two dissipative solutions starting from the same initial data $([\vr_0, \vm_0,S_0] ,E_0)$ with entropy production rates $\sigma^1$ and $\sigma^2$. Similarly to \eqref{I8}, we introduce the relation
\begin{equation*} %\label{D6}
([\vr^1, \vm^1,S^1], E_{0}) \succ ([\vr^2, \vm^2,S^2], E_{0})\ \Leftrightarrow \ \sigma^{1}(\tau \pm) \geq \sigma^{2}(\tau \pm)
\ \mbox{for any}\ \tau \in (0, \infty).
\end{equation*}

\begin{Definition}[Maximal dissipative solution] \label{DD2}
A dissipative solution $([\vr, \vm,S], E_0)$ starting from the initial data $([\vr_0, \vm_0, S_0],E_0)$ is a \emph{maximal dissipative solution} if
it is maximal with respect to the relation $\succ$. Specifically, if
\[
([\tvr, \tvm,\tilde S], {E}_{0}) \succ ([\vr, \vm, S],E_{0}),
\]
where $([\tvr, \tvm, \tilde S],{E}_{0})$ is another dissipative solution starting from $([\vr_0, \vm_0,S_0], E_0)$, then
\[
\sigma = \tilde \sigma  \ \mbox{in}\ [0, \infty).
\]
Here $\sigma$ and $\tilde\sigma$ are the entropy production rates of $([\tvr, \tvm, \tilde S],{E}_{0})$ and $([\vr, \vm, S],E)$ respectively.
\end{Definition}

\subsection{Sequential stability}

We start by introducing suitable topologies on the space of the initial data and the space of dissipative solutions.
Fix  $\ell>N/2+1$ and consider the Hilbert space
\[
\mathcal S_{\rm Euler} = W^{-\ell,2}(\Q) \times W^{-\ell,2}(\Q; R^N)\times W^{-\ell,2}(\Q)  \times R,
\]
together with its subset containing the initial data
\begin{equation*}% \label{S1}
\begin{split}
\mathcal D_{\rm Euler} = &\left\{ ([\vr_0, \vm_0, S_0],E_0) \in L^1(\Omega; R^{N+2}) 
\times R \ \Big| \right. \\ &\left. \vr_0 \geq 0,\, S_0\geq s_0 \varrho_0,\,
\intQ{ \left[\frac{1}{2} \frac{| \vm_0|^2}{ \vr_0} + c_v {\vr}_0^\gamma \exp \left( \frac{
 S_0}{c_v  \vr_0} \right)\right]} \leq E_0 \right\} 
\end{split}
\end{equation*}
Note that  $\mathcal D_{\rm Euler}$ is a closed convex subset of $\mathcal S_{\rm Euler}$. We also define the trajectory space
$$\mathcal{T}_{\rm Euler} =  \BV_{\rm loc}([0, \infty); \mathcal{S}_{\rm Euler}).$$ 

In the following we are going to show sequential stability (compactness) of the set of dissipative solutions. This will be subsequently used in the proof of existence of dissipative solutions (using a suitable approximation) as well as measurability of the mapping
\[
([\vr_0, \vm_0,S_0], E_0) \in \mathcal{D}_{\rm Euler} \mapsto \mathcal{U}([\vr_0, \vm_0,S_0], E_0) \in 2^{\mathcal T_{\rm Euler}},\
\mathcal T_{\rm Euler}=L^1_{{\rm loc}}([0, \infty); \mathcal{S}_{\rm Euler}),
\]
where $\mathcal{U}( [\vr_0, \vm_0,S_0], E_0)$ denotes the solution set
\begin{equation*} %\label{S2}
\begin{split}
\mathcal{U} &([\vr_0, \vm_0,S_0], E_0) = 
\left\{ [\vr, \vm,S, E_0] \in \mathcal{T}_{\rm Euler} \ \Big| \right.\\ &
([\vr, \vm,S], E_0) \ \mbox{is a dissipative solution with initial data}\ ([\vr_0, \vm_0,S_0], E_0) \Big\}
\end{split}
\end{equation*}
for the initial data $([\vr_0, \vm_0,S_0], E_0) \in \mathcal D_{\rm Euler}$.
 We have the following result.
\begin{Proposition} \label{ESP2}

Suppose that $\{([ \vr_{0,\ep}, \vm_{0,\ep},S_{0,\ep}], E_{0,\ep} )\}_{\ep > 0} \subset \mathcal D_{\rm Euler}$ is a sequence of data giving rise to a family of dissipative solutions
$\{ ([\vre, \vme,S_\ep], E_{0,\ep}) \}_{\ep > 0}$, that is,
$$
([\vre, \vme,S_\ep], E_{0,\ep}) \in \mathcal{U}([ \vr_{0,\ep}, \vm_{0,\ep}, S_{0,\ep}], E_{0,\ep} ).
$$
Moreover, we assume that there  exists $\Ov{E}>0$ such that $  E_{0,\ep} \leq \Ov{E}$ for all $\ep > 0$.

Then, at least for suitable subsequences,
\begin{align} \label{ES1}
\begin{aligned}
&\vr_{0,\ep} \to \vr_0 \ \mbox{weakly in}\ L^\gamma(\Omega), \
\vm_{0,\ep} \to \vm_0 \ \mbox{weakly in}\ L^{\frac{2 \gamma}{\gamma + 1}}(\Omega; R^N),\\
&\qquad\qquad S_{0,\ep} \to S_0 \ \mbox{weakly in}\ L^\gamma(\Omega),\ E_{0,\ep} \to E_0.
\end{aligned}
\end{align}
and
\[
\begin{split}
\vre &\to \vr \ \mbox{in}\ C_{{\rm weak, loc}}([0, \infty); L^\gamma (\Omega)),\\
\vme &\to \vm \ \mbox{in}\ C_{{\rm weak, loc}}([0, \infty); L^{\frac{2 \gamma}{\gamma + 1}} (\Omega; R^N)),\\
S_\ep &\rightarrow S \ \mbox{in}\ L^q_{\rm loc} ([0,\infty);W^{-\ell,2}(\Omega))\ \mbox{for any}\ q<\infty,\\
S_\ep &\rightarrow S \ 
\mbox{weakly-(*) in}\ L^\infty(0, \infty; L^{\gamma}(\Omega)),
\end{split}
\]
where
\[
([\vr, \vm,S], E_0) \in \mathcal{U} ([\vr_0, \vm_0,S_0], E_0).
\]

\end{Proposition}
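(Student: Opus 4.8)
The plan is to extract weakly convergent subsequences of all the objects in play --- the barycenters $\vr_\ep,\vm_\ep,S_\ep$, the Young measures $\mathcal{V}^\ep_{t,x}$, and the three concentration defect measures --- and then verify that the limiting objects again satisfy Definition~\ref{DD1}, with barycenters equal to the weak limits of $\vr_\ep,\vm_\ep,S_\ep$. First I would collect the a~priori bounds: the uniform energy bound $E_{0,\ep}\le\Ov E$, together with the energy equality \eqref{D2} and the minimum entropy principle \eqref{D5b}, yields via \eqref{D6}--\eqref{D9} uniform bounds for $\vr_\ep$ in $L^\infty(0,\infty;L^\gamma(\Omega))$, for $\vm_\ep$ in $L^\infty(0,\infty;L^{2\gamma/(\gamma+1)}(\Omega;R^N))$, and for $S_\ep$ in $L^\infty(0,\infty;L^\gamma(\Omega))$, as well as uniform bounds for the total masses of $\mathfrak C^\ep_{\rm kin},\mathfrak C^\ep_{\rm int},\mathfrak C^\ep_{\rm conv}$. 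Passing to subsequences, these give the weak convergences of the data \eqref{ES1} (balls in these reflexive spaces being weakly compact and metrizable), the weak-$(*)$ convergences $\mathfrak C^\ep_\bullet\to\mathfrak C_\bullet$ in $L^\infty_{{\rm weak-(*)}}(0,\infty;\mathcal M^+)$, and --- by the fundamental theorem on Young measures --- a limit parametrized measure $\mathcal{V}_{t,x}$ generated by $\mathcal{V}^\ep_{t,x}$.

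The core step is the limit passage in the nonlinear flux, pressure and energy terms. For each nonlinearity $g\in\{\tfrac{|\tilde\vm|^2}{\tilde\vr},\,p(\tilde\vr,\tilde S),\,\tfrac{\tilde\vm\otimes\tilde\vm}{\tilde\vr},\,\tilde S\tfrac{\tilde\vm}{\tilde\vr}\}$ the barycenters $\langle\mathcal{V}^\ep_{t,x};g\rangle$ are bounded in $L^1$ by the energy, but may concentrate. I would therefore split each weak-$(*)$ limit into the part captured by the limit Young measure $\langle\mathcal{V}_{t,x};g\rangle$ and a residual concentration, and \emph{define} the limiting defect measures $\mathfrak C_{\rm kin},\mathfrak C_{\rm int},\mathfrak C_{\rm conv}$ to absorb precisely this residual plus the weak-$(*)$ limits of the old defects $\mathfrak C^\ep_\bullet$. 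The sign of the residuals is controlled by the convexity and lower semicontinuity of $\E(\vr,\vm,S)$ established after Lemma~\ref{LC1}, which forces $\mathfrak C_{\rm kin},\mathfrak C_{\rm int}\ge0$ and closes the limiting energy balance \eqref{D2} as an equality with the limit value $E_0$; the relation $\tfrac12\int_{S^{N-1}}\D\mathfrak C_{\rm conv}=\mathfrak C_{\rm kin}$ survives because it holds along the sequence and is stable under weak-$(*)$ limits. With these definitions the limit of \eqref{D4a} reproduces \eqref{D4a} for the limiting objects.

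For the time regularity, equations \eqref{D3a} and \eqref{D4a} bound the time-derivatives of $t\mapsto\intO{\vr_\ep\varphi}$ and $t\mapsto\intO{\vm_\ep\cdot\bfphi}$ uniformly for fixed smooth $\varphi,\bfphi$, so these maps are equi-Lipschitz; combined with the $L^\infty_tL^p_x$ bounds and metrizability of the weak topology on balls, Arzel\`a--Ascoli gives convergence in $C_{\rm weak,loc}$ and identifies $\vr=\langle\mathcal{V};\tilde\vr\rangle$, $\vm=\langle\mathcal{V};\tilde\vm\rangle$. For the entropy, the renormalized balance \eqref{D5c} exhibits $t\mapsto\intO{S_\ep\varphi}$ as the sum of an equicontinuous part and a monotone part, hence uniformly $\BV_{\rm loc}$; Helly's selection theorem then yields convergence of $S_\ep$ in $L^q_{\rm loc}([0,\infty);W^{-\ell,2}(\Omega))$ to $S=\langle\mathcal{V};\tilde S\rangle$, while the $L^\infty_tL^\gamma$ bound gives the weak-$(*)$ convergence. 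Passing to the limit in \eqref{D5c} preserves the inequality, the test functions being non-negative. Finally I would check that the limit data lie in $\mathcal D_{\rm Euler}$ --- non-negativity of $\vr_0$, the bound $S_0\ge s_0\vr_0$, and the energy inequality all survive weak limits by convexity --- so that $([\vr,\vm,S],E_0)\in\mathcal{U}([\vr_0,\vm_0,S_0],E_0)$.

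The main obstacle is the bookkeeping of concentrations in the core step: one must ensure simultaneously that the newly generated concentration carries the tensorial/angular structure needed to be represented by a single measure $\mathfrak C_{\rm conv}$ on $S^{N-1}\times\Ov\Omega$ compatible with $\mathfrak C_{\rm kin}$, that the kinetic and internal defects remain non-negative, and that the energy identity \eqref{D2} closes as an equality with $E_0$ rather than degenerating to an inequality. This is precisely where the convexity and lower semicontinuity of $\E(\vr,\vm,S)$ and the DiPerna--Majda representation of $L^1$-weak limits with concentrations must be invoked most carefully.
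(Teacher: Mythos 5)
Your proposal follows essentially the same route as the paper's proof: uniform bounds from the energy equality \eqref{D2} and the entropy minimum principle via \eqref{D6}--\eqref{D9} (with Jensen/convexity from Lemma \ref{LC1}), Arzel\`a--Ascoli for the weakly continuous fields $\vr_\ep,\vm_\ep$, Helly's theorem for the $\BV$ entropy averages, Young measure compactness, and a final limit passage in \eqref{D2}, \eqref{D4a} handled by defect-measure bookkeeping. The one step you flag as the main obstacle---constructing $\mathfrak{C}_{\rm kin},\mathfrak{C}_{\rm int},\mathfrak{C}_{\rm conv}$ with the correct tensorial structure and compatibility relation---is precisely the step the paper itself does not carry out but delegates to \cite[Section 3]{BreFeiHof19}, so your sketch is at the same level of completeness as the published argument.
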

\begin{proof}
We mainly follow the ideas from \cite[Proposition 3.1]{BreFeiHof19},  which we refer to for further details. Some modifications are needed to accommodate the additional variable $S$ (in the internal energy and the entropy balance).

First, we claim that
the convergence in \eqref{ES1} follows immediately from the boundedness of the initial energy $E_{0,\ep}$ and the uniform bounds 
\eqref{D6}--\eqref{D8}. Indeed, 
using Jensen's inequality and convexity of the energy established in Lemma \ref{LC1}, we deduce that
\begin{equation}\label{eq:J1}
\left[ \frac{1}{2} \frac{|\vme|^2}{\vre} + p(\vre,S_\ep) \right] (t,x)
\leq \left< \mathcal{V}_{t,x}^\ep ; \frac{1}{2} \frac{|\tvm|^2}{\tvr} + p( \tvr,\tilde S)\right> \ \mbox{for a.a.}\ (t,x) \in 
(0, \infty) \times \Q,
\end{equation}
where $\mathcal{V}^{\ep}$ is the (Young) measure associated with the solution $([\vr_{\ep},\vm_{\ep},S_\ep],E_{0,\ep})$.
Integrating over $\Omega$ we can see that the right hand side is bounded by  $E_{0,\ep} \leq \Ov{E}$ using \eqref{D2}.

On the other, we can use \eqref{D6}--\eqref{D9} to obtain the estimates
\begin{align}\label{eq:2002a}
\sup_{t>0}\int_\Omega\left[\frac{|\vme|^{2}}{\vre}+\vre^\gamma+S^\gamma_\ep \right]\dx&\leq \,c(\Ov{E}),\\
 \sup_{t>0}\int_\Omega\left[|\vme|^{2\gamma/\gamma+1}+\Big|\frac{S_\ep}{\sqrt{\vre}}\Big|^{2\gamma} \right]\dx&\leq \,c(\Ov{E}),\label{eq:2002b}
\end{align}
uniformly in $\ep$.
Consequently, we deduce from equations \eqref{D3}, \eqref{D4}, and the energy balance \eqref{D2}, that
\[
\begin{split}
\vre &\to \vr \ \mbox{in}\ C_{{\rm weak, loc}}([0, \infty); L^\gamma(\Omega)), \ \vr \geq 0,\\
\vme &\to \vm \ \mbox{in}\ C_{{\rm weak, loc}}([0, \infty); L^\frac{2\gamma}{\gamma+1}(\Omega; R^N)),
\end{split}
\]
where
\[
\vr(0, \cdot) = \vr_0, \ \vm(0, \cdot) = \vm_0.
\]

Similarly, we get from \eqref{eq:2002a} that 
\[
S_\ep \to S \ \mbox{weakly-(*) in}\ L^\infty(0, \infty; L^{\gamma}(\Omega)).
\]
Moreover, we deduce from the entropy balance \eqref{D5c} and Helly's selection theorem that 
\[
\intO{ S_{\ep}(t, \cdot) \varphi } \to \intO{ S(t, \cdot) \varphi } 
\ \mbox{for any}\ t > 0 \ \mbox{and any}\ \varphi \in W^{\ell,2}(\Omega),
\]
modifying $S$ on the set of times of measure zero as the case may be (we split $\varphi=\varphi^+-\varphi^-$ and argue for both separately). Note that it is enough to show the former one for a dense subset of $W^{\ell,2}(\Omega)$ which follows from a diagonal argument. Seeing that 
$L^{\gamma}(\Omega)$ endowed with the weak topology is compactly embedded in $W^{-\ell,2}(\Omega)$ we obtain the desired conclusion 
\[
S_\ep \to S \ \mbox{in}\ L^q_{\rm loc}(0, \infty; W^{-\ell,2}(\Omega)) \ \mbox{for any}\ 1 \leq q < \infty.
\]

Obviously, 
\[
E_{0,\ep} \to E_0 
\]
passing to a subsequence as the case may be. Next, 
it is easy to perform the limit in the equation of continuity \eqref{D3} to obtain 
\eqref{D3a}. 

Next, as a consequence of the energy balance \eqref{D2}, the Young measures $\mathcal{V}^{\varepsilon}_{t,x}$ have uniformly bounded first moments; whence
\begin{equation}\label{eq:2}
\mathcal{V}^\ep_{t,x}  \to \mathcal{V}_{t,x}
\ \mbox{weakly-(*) in}\ L^\infty_{\rm weak-(*)} \left((0, \infty ) \times \Q; \mathcal{P}(\mathcal{S}) \right)
\end{equation}
at least for a subsequence. In addition, using estimates \eqref{D7}--\eqref{D9} we can pass to the limit in the entropy balance 
\eqref{D5c}. 

The limit in the total energy balance \eqref{D2} and the momentum balance \eqref{D4} 
involves the concentration measures and is more technical. However, the procedure is exactly the same as in \cite[Section 3]{BreFeiHof19}
therefore we omit the proof here. 
\end{proof}

\subsection{Existence of dissipative solutions}
The sequential stability from the previous part combined with a suitable approximation implies the existence of a dissipative solution. The precise statement is the content of the following assertion.
\begin{Proposition} \label{ESP1}

Let $([\vr_0,\vc{m}_0,S_0],E_0)\in \mathcal D_{\rm Euler}$ be given.
Then the Euler system \eqref{I1}--\eqref{I4} admits a dissipative solution in the sense of Definition
\ref{DDS1} with the initial data
$([\vr_0, \vm_0,S_0], E_0)$.

\end{Proposition}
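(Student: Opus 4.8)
The plan is to combine a \emph{suitable approximation} of the initial data with the sequential stability established in Proposition \ref{ESP2}. Since that proposition already shows the set of dissipative solutions to be closed under the convergence \eqref{ES1}, it suffices to produce, for the prescribed datum $([\vr_0,\vm_0,S_0],E_0)\in\mathcal D_{\rm Euler}$, a sequence of dissipative solutions whose initial data converge to it in the sense of \eqref{ES1} while keeping the energies bounded by a fixed $\Ov E$. The real content is therefore to construct, for a dense enough class of data, a single dissipative measure--valued solution in the sense of Definition \ref{DD1}; its barycenter
\[
\vr=\left<\mathcal{V}_{t,x};\tvr\right>,\quad \vm=\left<\mathcal{V}_{t,x};\tvm\right>,\quad S=\left<\mathcal{V}_{t,x};\tilde S\right>,
\]
is then a dissipative solution by Definition \ref{DDS1}.

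First I would regularize the data. Because $\mathcal D_{\rm Euler}$ is closed and convex and the bounds \eqref{D6}--\eqref{D9} depend only on $\Ov E$, a density argument lets me reduce to smooth data with $\vr_0\in C^\infty(\Ov\Omega)$ bounded away from zero, $\vm_0,S_0\in C^\infty(\Ov\Omega)$, $S_0\geq s_0\vr_0$, and a strict energy inequality in the definition of $\mathcal D_{\rm Euler}$. Any such smoothed datum is reachable in the topology of \eqref{ES1}, so this reduction is harmless precisely because the final limit passage is supplied by Proposition \ref{ESP2}.

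For the smoothed datum I would then invoke the construction of dissipative measure--valued solutions from \cite{BreFei17}, realized as a vanishing--dissipation limit of a viscous, heat--conducting (Navier--Stokes--Fourier) approximation. The approximating system possesses global weak solutions $(\vr_n,\vm_n,S_n)$ that satisfy the continuity equation \eqref{D3a} exactly, the momentum balance with a vanishing viscous stress, the entropy balance \eqref{D5c} with a nonnegative production term, and the total energy balance \eqref{D2}; the uniform bounds \eqref{D6}--\eqref{D9} follow from the conserved energy $E_0$ together with the entropy minimum principle \eqref{D5b}. Letting $n\to\infty$, the nonlinear compositions generate a Young measure $\mathcal{V}_{t,x}$, and the failure of strong convergence in the energy, internal--energy and convective nonlinearities is recorded by the defect measures $\mathfrak{C}_{\rm kin}$, $\mathfrak{C}_{\rm int}$, $\mathfrak{C}_{\rm conv}$. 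One then verifies that the limit satisfies \eqref{D2}, \eqref{D3a}, \eqref{D4a} and \eqref{D5c}, i.e.\ it is a dissipative measure--valued solution, whose expectations furnish a dissipative solution for the smoothed datum.

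The step I expect to be the main obstacle is the identification of the \emph{refined} convective defect measure $\mathfrak{C}_{\rm conv}$ on $S^{N-1}\times\Ov\Omega$ together with the compatibility relation $\tfrac12\int_{S^{N-1}}\D\mathfrak{C}_{\rm conv}=\mathfrak{C}_{\rm kin}$, which is exactly the refinement over \cite{BreFei17} announced in the introduction and carried out for the isentropic case in \cite{BreFeiHof19}. Concretely, one must resolve not merely the scalar concentration of $|\vm_n|^2/\vr_n$ but the full tensorial concentration of $\vm_n\otimes\vm_n/\vr_n$, representing its concentration part as the angular average of $\xi\otimes\xi$ against a measure on the sphere and matching its half--trace with the kinetic energy defect; this demands a careful localization and a disintegration of the defect measure in the space variable. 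Granting this, the momentum balance \eqref{D4a} holds with the stated convective term and the half--trace identity closes \eqref{D2}. Finally, sending the regularization parameter to zero and applying Proposition \ref{ESP2} produces a dissipative solution with the original data $([\vr_0,\vm_0,S_0],E_0)$, which completes the proof.
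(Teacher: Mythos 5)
Your overall architecture---mollify the data, solve a dissipative approximation, pass to the limit via Young measures and concentration defect measures, and close the argument with the sequential stability of Proposition \ref{ESP2}---is the same as the paper's. The genuine gap is the approximating system you invoke: global-in-time weak solutions of the Navier--Stokes--Fourier system with the ideal-gas law \eqref{I3} are \emph{not} known to exist; the available existence theory requires structurally different constitutive relations (cold pressure/radiation components, growth conditions on $p$ and $e$), so the sequence $(\vr_n,\vm_n,S_n)$ on which your whole construction rests is not at your disposal, and patching the equation of state would add yet another limit passage inside the nonlinearities of the measure-valued formulation. The paper avoids this entirely by regularizing the \emph{Euler} system itself, following Kr\"oner and Zajaczkowski \cite{KrZa}: only the momentum equation is perturbed by $\ep \mathcal{L}\vu$ with $\mathcal{L}$ a sixth-order self-adjoint ``viscosity'' operator, while the entropy is transported exactly, $\partial_t s + \vu\cdot\nabla s = 0$ (see \eqref{ES11}); for this system global solvability is known, the minimum principle $s\geq s_0$ is immediate, and the entropy balance survives the limit.

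Two further points the paper treats explicitly, and which are not cosmetic, are missing from your argument. First, boundary conditions: Definition \ref{DD1} requires the momentum balance \eqref{D4a} for all $\bfphi$ with merely $\bfphi\cdot\vc{n}|_{\partial\Omega}=0$, and since $\mathfrak{C}_{\rm conv}$, $\mathfrak{C}_{\rm int}$ live on $\Ov{\Omega}$ they may charge $\partial\Omega$, so one cannot upgrade a posteriori from compactly supported test functions. This is precisely why the paper replaces the Dirichlet conditions of \cite{KrZa} by the Kato--Lai ansatz \cite{KaLai}, realizing $\mathcal{L}$ as the self-adjoint operator on $W^{3,2}_n$ with Neumann conditions so that every $\bfphi\in W^{3,2}_n$ is admissible in \eqref{ES11b}; with no-slip conditions in your viscous approximation, the resulting limit object would not satisfy Definition \ref{DD1} and the weak--strong uniqueness principle (Proposition \ref{DPP2}) would be lost. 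Second, energy \emph{equality}: the viscous approximation dissipates energy (cf.\ \eqref{ES13}), so the limit satisfies only the inequality \eqref{D2a}, whereas Definition \ref{DD1} demands equality in \eqref{D2}. The paper restores it by augmenting $\mathfrak{C}_{\rm int}$ by a spatially homogeneous measure $h(t)\dx$, which is harmless in the momentum equation because $\intO{\Div \bfphi} = 0$ for admissible $\bfphi$. By contrast, the tensorial structure of $\mathfrak{C}_{\rm conv}$ and the half-trace identity, which you single out as the main obstacle, is the one step the paper disposes of by quoting the isentropic argument of \cite{BreFeiHof19} inside the proof of Proposition \ref{ESP2}.
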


\begin{proof} 
Given initial data $([\vr_0,\vc{m}_0,S_0],E_0) \in \mathcal D_{\rm Euler}$ implies
\[
\vr_0 \in L^\gamma(\Q), \ \vm_0 \in L^{\frac{2\gamma}{\gamma + 1}}(\Q; R^N), \ S_0\in L^\gamma(\Q),\
\vr_0 \geq 0,\ S_0 \geq s_0 \vr_0.
\]
with the respective bounds in terms of $E_{0}$, recall the lower bounds for the energy obtained in \eqref{D6}--\eqref{D8}.
It is standard to construct smooth functions $\vr_{0,\ep}$, $\vm_{0,\ep}$ and $S_{0,\ep}$, where $\vr_{0,\ep}$ is strictly positive and $\vm_{0,\ep}$ compactly supported in $\Omega$, $S_{0,\ep} \geq \vr_{0,\ep} s_0$ such that
\[
\vr_{0,\ep} \to \vr_0 \ \mbox{in}\ L^\gamma(\Q),\
\vm_{0,\ep} \to \vm_0 \ \mbox{in}\ L^{\frac{2\gamma}{\gamma + 1}}(\Q; R^N),\ S_{0,\ep} \to S_0 \ \mbox{in}\ L^\gamma(\Q),
\]
and
\[
\intQ{ \left[ \frac{1}{2}\frac{|\vm_{0,\ep}|^2}{\vr_{0,\ep}} +  c_v \vr^\gamma_{0,\ep} \exp\left( \frac{S_{0,\ep}}{c_v \vr_{0,\ep}} \right)\right] } \to\intQ{ \left[ \frac{1}{2}\frac{|\vm_{0}|^2}{\vr_{0}} + c_v \vr^\gamma_{0} \exp\left( \frac{S_{0}}{c_v \vr_{0}} \right)\right] } 
\]
as $\ep \to 0$. Finally, we set $s_{0,\ep}=S_{0,\ep}/\vr_{0,\ep}$ and $\vu_{0,\ep}=\vm_{0,\ep}/\vr_{0,\ep}$.

Now, similarly to Kr\"oner and Zajaczkowski \cite{KrZa}, we consider the approximate system
\begin{equation} \label{ES11}
\begin{split}
\partial_t \vr + \Div (\vr \vu) &= 0,\\
\partial_t (\vr \vu) + \Div (\vr \vu \otimes \vu) + \Grad\left(\vr^\gamma \exp\left( \frac{s}{c_v} \right)\right) &= \varepsilon
\mathcal{L} \vu,\ \ep > 0,\\
\partial_t s+\vu\cdot\nabla s&=0,
\end{split}
\end{equation}
with the initial data 
\begin{equation} \label{ES12}
\vr(0, \cdot) = \vr_{0,\ep},\ \vu(0, \cdot) = \vu_{0,\ep}, \ s(0,\cdot)=s_{0,\ep}.
\end{equation}

Here, the symbol $\mathcal{L}$ denotes a suitable ``viscosity'' operator, with the associated set of boundary conditions to be imposed on the velocity field $\vu$. In \cite{KrZa}, the authors consider $\mathcal{L} = \Del^3$ with the \emph{Dirichlet} boundary conditions. 
This is not convenient here, as the resulting admissible test functions $\bfphi$ in the momentum equation \eqref{D4} would be compactly
supported in $\Omega$. In order to preserve the weak--strong uniqueness principle (Proposition \ref{DPP2}), however, we need a larger 
class of test functions satisfying merely the impermeability condition $\bfphi \cdot \vc{n}|_{\partial \Omega} = 0$. To this end, we use a different ansatz inspired by Kato and Lai \cite{KaLai}.

Let $W^{3,2}_n(\Omega; R^N)$ be the Hilbert space, 
\[
W^{3,2}_n (\Omega; R^N) = \left\{ \vu \in W^{3,2}(\Omega; R^N) \ \Big|\ \vu \cdot \vc{n}|_{\partial \Omega} = 0 \right\}.
\]
We suppose that $\Omega \subset R^N$, $N=1,2,3$ is a bounded regular domain so that 
$W^{3,2}_n(\Omega; R^N)$ is compactly embedded in $C^1(\Ov{\Omega}; R^N)$. Moreover, $W^{3,2}_n$ being a closed subset of the 
separable Hilbert space $W^{3,2}$ is separable. Let $(( \ ; \ ))$ be the scalar product on $W^{3,2}_n$, that is,
\[
(( \vc{v}; \vc{w} )) = \sum_{|\alpha| = 3} \intO{ \partial^\alpha_x \vc{v} \cdot \partial^\alpha_x \vc{w} } 
+ \intO{ \vc{v} \cdot \vc{w} },\ \vc{v},  \vc{w} \in W^{3,2}_n(\Omega; R^N).
\]   
Similarly to \cite{KaLai}, we consider $\mathcal{L}$ to be the self--adjoint operator on $W^{3,2}_n$ associated to the bilinear form 
$(( \ ; \ ))$, namely 
\[
\mathcal{L} \vu = \sum_{|\alpha| = 3} (-\partial^\alpha_x) \partial^\alpha_x \vu + \vu 
\ \mbox{with the homogeneous Neumann boundary conditions.}
\]
The associated variational formulation of the momentum equation in \eqref{ES11} then reads 
\begin{equation} \label{ES11b}
\left[ \intO{ \vr \vu \cdot \bfphi } \right]_{t = 0}^{t = \tau} - 
\intO{ \left[ \vr \vu \otimes \vu : \Grad \bfphi + \vr^\gamma \exp \left( \frac{s}{c_v} \right) \Div \bfphi \right] } 
= - \ep (( \vu; \bfphi ))
\end{equation}
for any $\tau > 0$, and any $\bfphi \in W^{3,2}_n(\Omega; R^N)$.

As $W^{3,2}_n$ is a separable Hilbert space, the existence proof used in \cite[Theorem 4.1]{KrZa} applies 
without essential modifications (see also Hoff \cite{Hoff10} for how to handle the transport equations for $\vr$ and $s$). 
Given $\ep > 0$, there exists a trio $(\vr_\ep,\vu_\ep,s_\ep)$ of the following class
which is a classical solution to \eqref{ES11} in the following sense:
\begin{itemize}
\item The balance of mass \eqref{ES11}$_1$ and the entropy balance \eqref{ES11}$_3$ hold pointwise in $(0,\infty)\times\Omega$.
\item The balance of momentum \eqref{ES11}$_2$ holds as \eqref{ES11b}.
\item The solution possesses  enough regularity to rigorously justify the standard energy estimate
\begin{equation} \label{ES13}
\frac{{\rm d}}{\dt} \intQ{ \left[ \frac{1}{2}\vr_{\ep} |\vu_{\ep}|^2 + c_v \vr^\gamma_{\ep} \exp\left( \frac{s_{\ep}}{c_v} \right)\right] }+\ep ((\vu_\ep; \vu_\ep ))^2 = 0.
\end{equation}

\end{itemize}

Finally, we set 
$S_\ep=\vr_\ep s_\ep$ and $\vm_\ep=\vre\vue$. 
Using the arguments of the preceding section, it is easy to perform the limit $\ep \to 0$ in the sequence of approximate solutions
\[
\left\{ \vre, \vm_\ep = \vre \vue,S_\ep=\vr_\ep s_\ep, E_\ep = \intQ{ \left[ \frac{1}{2}\vr_{\ep} |\vu_{\ep}|^2+c_v \vr^\gamma_{\ep} \exp\left( \frac{S_{\ep}}{c_v \vr_{\ep}} \right) \right] }\right\}_{\ep > 0}
\]
to obtain the desired dissipative solution, with the exception of the energy equality \eqref{D2} that now reads 
\begin{equation} \label{D2a}
\intO{ \left< \mathcal{V}_{t,x}; \frac{1}{2} \frac{|\tilde \vm|^2}{\tilde \vr} + c_v {\tilde \vr}^\gamma \exp \left( \frac{\tilde S}{c_v \tilde \vr} \right) \right>}
+ \int_{\Ov{\Omega}} (\D \mathfrak{C}_{\rm kin}(t) + \D \mathfrak{C}_{\rm int}(t)) \leq E_0
\ \mbox{for a.a.}\ t \geq 0.
\end{equation}
Note that, on account of \eqref{ES13}, it is easy to see that viscous term in the momentum equation vanishes as $\ep\to 0$.
In contrast with \eqref{D2a}, the entropy balance \eqref{D5c} holds as \emph{equality}. To convert \eqref{D2a} to equality, 
it is enough to augment $\mathfrak{C}_{\rm int}(t)$ by $h(t) \dx$ with a suitable spatially homogeneous $h \geq 0$. Note that the momentum equation 
\eqref{D4} remains valid as $\mathfrak{C}_{\rm int}(t)$ acts on $\Div \bfphi$, where $\bfphi \cdot \vc{n}|_{\partial \Omega} = 0$; 
whence
\[
\intO{ h(t) \Div \bfphi } = 0.
\] 
\end{proof}

\section{Semiflow selection}
\label{S}

The goal of this section is to show that there is a semiflow selection to the Euler system \eqref{I1}. We recall that the precise definition of a semiflow - in the abstract framework - is given in Definition \ref{dDD1}.  
The following is the main result of the present paper.

\begin{Theorem}[Semiflow solution] \label{ST1}
The Euler system \eqref{I1}--\eqref{I4} admits a semiflow solution
in the class of dissipative solutions in the sense of Definition \ref{DDS1}. More specifically, 
for any initial data 
\[
\mathbb{U}_0 = \left( [ \vr_0, \vm_0, S_0 ], E_0 \right) \in \mathcal{D}_{\rm Euler} 
\]
there exists a dissipative solution 
\[
\mathbb{U} = \mathbb{U}[t, \mathbb{U}_0] = 
\left( \big[ \vr(t, \cdot), \vm(t, \cdot), S(t-, \cdot) \big], E_0 \right) \in 
\mathcal{T}_{\rm Euler}
\]
enjoying the following properties:
\begin{itemize} 
\item for each $\mathbb{U}_0 \in \mathcal{D}_{\rm Euler}$ the solution 
$\mathbb{U}[\cdot , \mathbb{U}_0]$ is maximal in the sense of Definition \ref{DD2};
\item $\mathcal{D}_{\rm Euler}$ is an invariant set, meaning 
\[
\mathbb{U}[t, \mathbb{U}_0] \in \mathcal{D}_{\rm Euler} 
\]
for any $t \geq 0$;
\item the mapping 
\[
\mathbb{U}_0 \in \mathcal{S}_{\rm Euler} \mapsto \mathbb{U}[\cdot, \mathbb{U}_0] 
\in L^1_{\rm loc}(0, \infty; \mathcal{S}_{\rm Euler})
\]
is Borel measurable; 
\item the mapping
\[
\mathbb{U}: [0, \infty) \times \mathcal{D}_{\rm Euler} \mapsto \mathcal{D}_{\rm Euler}
\]
is a semiflow, specifically,
\[
\mathbb{U}[t_1 + t_2; \mathbb{U}_0] = \mathbb{U}\left[ t_2; \Big[ \mathbb{U}[t_1; \mathbb{U}_0 \Big] \right] 
\]
for any $t_1, t_2 \geq 0$, and any $\mathbb{U}_0 \in \mathcal{D}_{\rm Euler}$.
\end{itemize}

\end{Theorem}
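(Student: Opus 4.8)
The plan is to realize the semiflow as an application of the abstract selection theorem, Proposition \ref{dDP1}, to the solution operator $\mathcal{U}$ generated by the dissipative solutions of Definition \ref{DDS1}. I would take as phase space $\mathcal{S}=\mathcal{D}_{\rm Euler}$: being a closed convex subset of the separable Hilbert space $\mathcal{S}_{\rm Euler}$, it is Polish, and by Remark \ref{RRR2} the solutions live in fixed bounded balls on which the $W^{-\ell,2}$ topology agrees with the weak topologies of $L^\gamma$ and $L^{2\gamma/(\gamma+1)}$, so that $\mathcal{T}_{\rm Euler}=\BV_{\rm loc}([0,\infty);\mathcal{S}_{\rm Euler})$ carries the structure required by the abstract framework. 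The substance of the proof is then to verify the four structural hypotheses {\bf[A1]}--{\bf[A4]} of Definition \ref{dDD2}; once these hold, Proposition \ref{dDP1} furnishes a Borel measurable semiflow selection, and its supplementary minimization property \eqref{eq:1} will be exploited to upgrade the selection to a maximal dissipative solution in the sense of Definition \ref{DD2}.

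Three of the four hypotheses I expect to be comparatively routine. Property {\bf[A1]} is exactly the combination of the existence result Proposition \ref{ESP1} (non-emptiness) with the sequential stability Proposition \ref{ESP2}: the energy bound together with \eqref{D6}--\eqref{D8} yields boundedness in $\mathcal{T}_{\rm Euler}$, while the stability statement shows that any sequence of solutions of bounded energy admits a subsequence converging in $L^1_{\rm loc}$ to another dissipative solution, which is precisely compactness of $\mathcal{U}[\mathbb{U}_0]$. Property {\bf[A2]} I would deduce from the same stability: Proposition \ref{ESP2} shows the graph of $\mathcal{U}$ is closed and its values lie in a fixed compact set, whence $\mathcal{U}$ is upper semicontinuous and therefore Borel measurable for the Hausdorff topology. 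Property {\bf[A3]} reflects the autonomy of the system: for a dissipative measure--valued solution $\mathcal{V}_{t,x}$ and its defect measures, the time shift $\mathfrak{S}_T[\mathcal{V}]$ again satisfies \eqref{D2a}, \eqref{D3a}, \eqref{D4a}, \eqref{D5c} on $[0,\infty)$, now with initial datum $\mathbb{U}(T-)$, using the convention $S(0-)=S(T-)$ for the entropy.

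The genuinely delicate point, and where I expect the main obstacle, is property {\bf[A4]} (continuation). Given $\mathbb{U}^1\in\mathcal{U}[\mathbb{U}_0]$, $T>0$, and $\mathbb{U}^2\in\mathcal{U}[\mathbb{U}^1(T-)]$, one must show that the glued object $\mathbb{U}^1\cup_T\mathbb{U}^2$ is again a dissipative solution. I would construct the concatenated Young measure and concatenated defect measures, then verify \eqref{D2a}, \eqref{D3a}, \eqref{D4a}, \eqref{D5c} by splitting every time integral at $T$. Continuity of $\vr$ and $\vm$ in $C_{\rm weak}$ guarantees that density and momentum match across the junction, so the mass and momentum identities glue with no spurious defect; the energy inequality \eqref{D2a} is preserved because the constant $E_0$ is carried unchanged. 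The only subtle identity is the entropy balance \eqref{D5c}, where the glued trajectory jumps from $S^1(T-)$ to $S^2(0+)$: since $\mathbb{U}^2$ emanates from the datum $\mathbb{U}^1(T-)$ and the total entropy is non--decreasing, this jump is non--negative and thus compatible with the distributional inequality, which therefore survives the gluing. Note that the expression $\mathcal{U}[\mathbb{U}^1(T-)]$ is meaningful precisely because $\mathbb{U}^1(T-)\in\mathcal{D}_{\rm Euler}$, itself a consequence of invariance.

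Finally I would extract the stated properties. Invariance of $\mathcal{D}_{\rm Euler}$ holds because every dissipative solution keeps $\vr\geq0$, obeys the minimum entropy principle \eqref{D5b} (so $S\geq s_0\vr$), and satisfies \eqref{D2a} at almost every time, i.e. the defining constraints persist for all $t\geq0$; measurability and the semigroup identity are delivered directly by Proposition \ref{dDP1}. For maximality I would apply \eqref{eq:1} with the functional $\beta(\mathbb{U})=-\intO{S}$, which is weakly continuous and bounded on the energy--bounded balls where the solutions live (truncated to be globally bounded if necessary): the selection then maximizes the Laplace transform of the entropy production $\sigma$. If a competitor $([\tvr,\tvm,\tilde S],E_0)\succ([\vr,\vm,S],E_0)$ satisfied $\tilde\sigma\geq\sigma$ on $[0,\infty)$, the two Laplace transforms would be forced to coincide, whence $\tilde\sigma=\sigma$ almost everywhere and then, by the existence of one--sided limits, identically. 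This is exactly maximality in the sense of Definition \ref{DD2}, completing the proof.
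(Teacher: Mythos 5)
Your proposal is correct and follows essentially the same route as the paper: it verifies {\bf [A1]}--{\bf [A4]} for the solution operator via Propositions \ref{ESP2}/\ref{ESP1}, time-shift and gluing (the paper's Lemmas \ref{LDS0}--\ref{LDS3}), applies Proposition \ref{dDP1}, and enforces maximality through a bounded, strictly monotone function of $\intO{S}$ in \eqref{eq:1}, concluding with the same Laplace-transform comparison argument. The only point where you diverge in substance is the junction in {\bf [A4]} and the invariance of $\mathcal{D}_{\rm Euler}$: the paper secures $\mathbb{U}^1(T-)\in\mathcal{D}_{\rm Euler}$ and the entropy compatibility via convexity of the energy and concavity of $(\vr,S)\mapsto \vr Z(S/\vr)$ (Jensen plus weak lower semicontinuity, Lemma \ref{LC1}), whereas you invoke weak continuity of $\vr,\vm$ and non-negativity of the entropy jump $S^2(0+)\geq S^1(T-)$ — a valid alternative, though the convexity step is still what ultimately justifies your assertion that the energy constraint ``persists'' at every time.
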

The claim of Theorem \ref{ST1} will follow from Proposition \ref{dDP1} as soon as we verify {\bf [A1]}--{\bf [A4]}
from Definition \ref{dDD2} for the solution set $\mathcal U([\vr_0, \vm_0,S_0], E_0)$ with $([\vr_0, \vm_0,S_0], E_0) \in \mathcal D_{\rm Euler}$. This will be done in the following lemmas.

\begin{Lemma}\label{LDS0}
For each $([\vr_0, \vm_0,S_0], E_0)\in \mathcal{D}_{\rm Euler}$ the set
$\mathcal U[\vr_0, \vm_0,S_0, E_0]$ is a non--empty bounded subset of $\BV_{\rm loc}([0, \infty); \mathcal{S}_{\rm Euler})$ and it is compact in the topology of $L^1_{{\rm loc}}([0, \infty); \mathcal{S}_{\rm Euler})$.
\end{Lemma}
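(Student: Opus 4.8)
The plan is to verify the three assertions of Lemma~\ref{LDS0}---nonemptiness, boundedness, and compactness---essentially by citing the machinery already assembled in the previous section. \emph{Nonemptiness} is immediate: Proposition~\ref{ESP1} guarantees that for any $([\vr_0, \vm_0, S_0], E_0) \in \mathcal{D}_{\rm Euler}$ there exists at least one dissipative solution with this initial datum, so $\mathcal{U}[\vr_0, \vm_0, S_0, E_0] \neq \emptyset$.

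For \emph{boundedness}, I would argue that every element of $\mathcal{U}[\vr_0, \vm_0, S_0, E_0]$ satisfies, by Definition~\ref{DDS1}, the energy equality \eqref{D2}, which controls the initial energy $E_0$; combined with the derived uniform bounds \eqref{D6}--\eqref{D8} this places $\vr$, $\vm$, and $S$ in fixed balls of $L^\infty([0,\infty); L^\gamma(\Omega))$, $L^\infty([0,\infty); L^{2\gamma/(\gamma+1)}(\Omega;R^N))$, and $L^\infty([0,\infty); L^\gamma(\Omega))$ respectively, with radii depending only on $E_0$. Since the embedding $L^\gamma(\Omega) \hookrightarrow W^{-\ell,2}(\Omega)$ is continuous, the trajectories are uniformly bounded in $\mathcal{S}_{\rm Euler}$, and the $\BV_{\rm loc}$ norm of $t \mapsto \int_\Omega S\varphi$ is controlled through the entropy balance \eqref{D5c} (the entropy production being bounded by the energy). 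This gives boundedness in $\BV_{\rm loc}([0,\infty); \mathcal{S}_{\rm Euler})$.

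The substantive part is \emph{compactness} in $L^1_{\rm loc}([0,\infty); \mathcal{S}_{\rm Euler})$, and here the key observation is that this is exactly the content of the sequential stability result, Proposition~\ref{ESP2}. I would take an arbitrary sequence $\{([\vre, \vme, S_\ep], E_0)\}_{\ep>0} \subset \mathcal{U}[\vr_0, \vm_0, S_0, E_0]$; since all share the \emph{same} fixed initial data, the hypotheses of Proposition~\ref{ESP2} hold trivially with $E_{0,\ep} = E_0 \leq \Ov{E}$. Proposition~\ref{ESP2} then yields a subsequence converging to a limit $([\vr, \vm, S], E_0)$ that is again a dissipative solution with the \emph{same} initial data, hence an element of $\mathcal{U}[\vr_0, \vm_0, S_0, E_0]$. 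The convergences furnished there---$\vre \to \vr$ in $C_{\rm weak, loc}([0,\infty); L^\gamma)$, $\vme \to \vm$ in $C_{\rm weak, loc}([0,\infty); L^{2\gamma/(\gamma+1)})$, and $S_\ep \to S$ in $L^q_{\rm loc}([0,\infty); W^{-\ell,2})$---imply convergence in $L^1_{\rm loc}([0,\infty); \mathcal{S}_{\rm Euler})$ once one checks that weak $L^\gamma$ convergence transfers to strong $W^{-\ell,2}$ convergence via the compact embedding $L^\gamma(\Omega) \hookrightarrow\hookrightarrow W^{-\ell,2}(\Omega)$ (using $\ell > N/2+1$), together with the uniform-in-time bounds to upgrade the pointwise-in-$t$ weak convergence to integrated strong convergence on compact time intervals by dominated convergence.

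The main obstacle I anticipate is the bookkeeping in the compactness step: one must confirm that the various modes of convergence delivered by Proposition~\ref{ESP2} genuinely combine into norm convergence in the \emph{single} space $L^1_{\rm loc}([0,\infty); \mathcal{S}_{\rm Euler})$ with its product $W^{-\ell,2}$-based topology. Concretely, the $C_{\rm weak}$-convergence of $\vre$ and $\vme$ gives, for each fixed $t$, weak convergence in $L^\gamma$ and hence---by compactness of $L^\gamma \hookrightarrow W^{-\ell,2}$---strong convergence in $W^{-\ell,2}$; the uniform bound then lets one integrate in time. For $S$ the convergence is already stated in $L^q_{\rm loc}([0,\infty); W^{-\ell,2})$, which dominates the $L^1_{\rm loc}$ requirement. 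The remaining care is simply that the limit lies in the solution set, which is precisely what Proposition~\ref{ESP2} asserts, so no new analysis is needed beyond assembling these citations.
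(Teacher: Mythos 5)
Your proof is correct and follows essentially the same route as the paper, whose entire proof is the one-line citation of Propositions~\ref{ESP2} and \ref{ESP1}; you have simply made explicit the bookkeeping (fixed initial data trivially satisfy the hypotheses of Proposition~\ref{ESP2}, and the $C_{\rm weak,loc}$ plus compact-embedding argument upgrades to $L^1_{\rm loc}([0,\infty);\mathcal{S}_{\rm Euler})$ convergence) that the paper leaves implicit.
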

\begin{proof}
The claim follows from Propositions \ref{ESP2} and \ref{ESP1}.
\end{proof}

\begin{Lemma} \label{LDS1}

Let 
$([\vr, \vc{m}, S], E_{0})$ be a dissipative solution to the Euler system in the sense of Definition \ref{DDS1}. 
Then $\mathfrak{S}_T \circ ([\vr, \vc{m}, S], E_{0})$ is a dissipative solution corresponding to the data $([\vr, \vc{m}, S](T-) ,E_{0}) $.
 
\end{Lemma}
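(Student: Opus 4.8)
The plan is to realize the shifted trajectory as the family of first moments of a time--shifted dissipative measure--valued solution, and then to verify the identities of Definition \ref{DD1} by a change of variables in time. Writing $\mathcal{V}_{t,x}$ and $\mathfrak{C}_{\rm kin},\mathfrak{C}_{\rm int},\mathfrak{C}_{\rm conv}$ for the dissipative measure--valued solution underlying $([\vr,\vc{m},S],E_0)$, I would set
\[
\mathcal{V}^T_{t,x}:=\mathcal{V}_{(T+t),x},\qquad \mathfrak{C}^T_{\bullet}(t):=\mathfrak{C}_{\bullet}(T+t),\quad \bullet\in\{{\rm kin},{\rm int},{\rm conv}\},
\]
and show that this triple is a dissipative measure--valued solution with initial data $([\vr,\vc{m},S](T-),E_0)$ whose first moments coincide with $\mathfrak{S}_T$ applied to $(\vr,\vc{m},S)$.

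The structural conditions come for free: the measure--preserving shift $t\mapsto T+t$ preserves membership in the spaces $L^\infty_{\rm weak-(*)}$, the uniform bounds \eqref{D6}--\eqref{D9}, and the relation $\tfrac12\int_{S^{N-1}}\D\mathfrak{C}^T_{\rm conv}=\mathfrak{C}^T_{\rm kin}$. The total energy balance \eqref{D2} holds for a.a.\ $t\geq T$ with the same constant $E_0$, hence for a.a.\ $t\geq0$ after the shift; that the shifted data lie in $\mathcal{D}_{\rm Euler}$ I would deduce by passing to the limit along a full--measure sequence $t_n\searrow T$ in \eqref{D2}, using weak lower semicontinuity of the convex energy (Lemma \ref{LC1}) and nonnegativity of the defect measures.

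For the mass and momentum balances I would substitute $r=T+t$ in \eqref{D3a} and \eqref{D4a} restricted to intervals $[T+\tau_1,T+\tau_2]$. Since $t\mapsto\intO{\langle\mathcal{V}_{t,x};\tilde\vr\rangle\varphi}$ and $t\mapsto\intO{\langle\mathcal{V}_{t,x};\tilde\vm\rangle\bfphi}$ are continuous, the left limits at $t=T$ equal the values there, so $\langle\mathcal{V}^T_{0,x};\tilde\vr\rangle=\vr(T-)$ and $\langle\mathcal{V}^T_{0,x};\tilde\vm\rangle=\vm(T-)$, and the shifted identities follow verbatim for all $0\le\tau_1<\tau_2$.

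The hard part will be the entropy balance, where the one--sided limits must be tracked carefully. Given a test function $\psi\in C^1([0,\infty)\times\Ov\Omega)$, $\psi\ge0$, I would set $\varphi(r,x):=\psi(r-T,x)$ for $r\ge T$, extend it smoothly to $[0,T)$, and apply \eqref{D5c} on $[T+\tau_1,T+\tau_2]$. The interior integrals transform into those of the shifted solution, while the left boundary term, taken at $\tau_1=0$, equals $\intO{\langle\mathcal{V}_{T-,x};\tilde S\rangle\varphi}=\intO{S(T-)\varphi}$. Since $S$ is only $\BV$ in time and may jump at $T$, it is precisely the \emph{left} limit $S(T-)$ that appears, matching the required convention $\langle\mathcal{V}^T_{0-,x};\tilde S\rangle=S(T-)$; the direction of the inequality is preserved. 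This establishes \eqref{D5c} for the shifted data, and as the first moments of $\mathcal{V}^T$ are the time translates of $\vr,\vc{m},S$, the identification with $\mathfrak{S}_T\circ([\vr,\vc{m},S],E_0)$ completes the argument.
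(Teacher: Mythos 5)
Your proof is correct and takes essentially the same approach as the paper, which compresses the entire argument into a single line (``a shift of a test function in \eqref{D2}--\eqref{D5} produces a test function in the same class''); your version supplies the details, in particular the identification of the shifted initial entropy with the left limit $S(T-)$ via the boundary term in \eqref{D5c}. One minor slip: to verify the energy constraint for the datum at $T-$ you should pass to the limit along $t_n \nearrow T$ rather than $t_n \searrow T$ in \eqref{D2}, since the right-sided limit only gives the constraint for $S(T+)$, after which you would need the extra observations that $S(T+) \geq S(T-)$ (entropy jumps upward by \eqref{D5c}) and that the energy is increasing in $S$.
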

\begin{proof}

We recall that the operation of \emph{time shift} is given by
\[
\mathfrak{S}_T \circ ([\vr, \vc{m}, S], E_0) (t) = ([\vr, \vc{m}, S ](T + t), E_0), \ T > 0, \ t \geq 0.
\]
Since a shift of a test function in \eqref{D2}--\eqref{D5} produces a test function in the same class, the claim easily follows.
\end{proof}

\begin{Lemma} \label{LDS2}

Let $([\vr^1, \vc{m}^1, S^1], E_{0})$ be a dissipative solution of the Euler system, $T > 0$ and let $([\vr^2, \vc{m}^2, S^2], E_{0})$ be another 
dissipative solution with the initial data $([\vr^1, \vc{m}^1, S^1] (T-), E_{0})$. 

Then 
\[
([\vr^1, \vc{m}^1, S^1], E_{0}) \cup_T ([\vr^2, \vc{m}^2, S^2], E_{0}) 
\]
is a dissipative solution of the Euler system.

\end{Lemma}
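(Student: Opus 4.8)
The plan is to exhibit the glued trajectory as the family of barycenters of a \emph{concatenated} measure--valued solution and then to verify the defining identities of Definition~\ref{DDS1} separately on the subintervals $(0,T)$ and $(T,\infty)$, the only genuinely delicate point being the entropy inequality across the junction time $T$. Denote by $\mathcal{V}^1,\mathcal{V}^2$ and by $\mathfrak{C}^i_{\rm kin},\mathfrak{C}^i_{\rm int},\mathfrak{C}^i_{\rm conv}$ ($i=1,2$) the dissipative measure--valued solutions from Definition~\ref{DD1} underlying $([\vr^i,\vc{m}^i,S^i],E_0)$, the second one having initial datum $[\vr^1,\vc{m}^1,S^1](T-)$. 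I would set
\[
\mathcal{V}^\cup_{t,x}=\mathcal{V}^1_{t,x}\ \mbox{for}\ 0<t<T,\qquad \mathcal{V}^\cup_{t,x}=\mathcal{V}^2_{t-T,x}\ \mbox{for}\ t>T,
\]
and concatenate the three concentration defect measures by the identical recipe, applying the time shift $\mathfrak{S}_T$ to the second piece. The resulting family lies in the correct spaces, inherits the constraint $\frac12\int_{S^{N-1}}\D\mathfrak{C}^\cup_{\rm conv}=\mathfrak{C}^\cup_{\rm kin}$ from each piece, and its barycenters reproduce precisely the fields of $([\vr^1,\vc{m}^1,S^1],E_0)\cup_T([\vr^2,\vc{m}^2,S^2],E_0)$.

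The energy identity \eqref{D2} is immediate since both pieces carry the same constant $E_0$. For the continuity and momentum identities \eqref{D3a}, \eqref{D4a} I would fix $0\le\tau_1<\tau_2$; the cases $\tau_2\le T$ and $\tau_1\ge T$ reduce to the identity for $\mathcal{V}^1$, respectively for $\mathcal{V}^2$ after a time shift as in Lemma~\ref{LDS1}, so only $\tau_1<T<\tau_2$ requires work. Splitting the time integral at $T$ and applying \eqref{D3a}, \eqref{D4a} to each piece leaves two boundary contributions at $t=T$; since $t\mapsto\intO{\left<\mathcal{V}_{t,x};\tilde\vr\right>\varphi}$ and $t\mapsto\intO{\left<\mathcal{V}_{t,x};\tilde\vm\right>\bfphi}$ are continuous and the second solution starts from $[\vr^1,\vc{m}^1](T-)=[\vr^1,\vc{m}^1](T)$, these contributions coincide and cancel, producing exactly the right--hand side evaluated between $\tau_1$ and $\tau_2$.

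The main obstacle is the entropy inequality \eqref{D5c} across $T$, because $S$ is only of bounded variation and genuinely jumps there. Once more only $\tau_1<T<\tau_2$ matters. The key trick is to apply \eqref{D5c} to $\mathcal{V}^1$ on $[\tau_1,s]$ with a test function $\varphi\ge0$ and let $s\uparrow T$: the upper boundary term $\intO{\left<\mathcal{V}^1_{s+,x};\tilde S\right>\varphi}$ then converges to $\intO{\left<\mathcal{V}^1_{T-,x};\tilde S\right>\varphi}$, i.e.\ the a priori larger right limit is replaced by the left limit $S^1(T-)$. Applying \eqref{D5c} to $\mathcal{V}^2$ on $[0,\tau_2-T]$ and shifting back produces a lower boundary term governed by $\left<\mathcal{V}^2_{0-,x};\tilde S\right>=S_0^2$; by hypothesis $S_0^2=S^1(T-)$, so the two interface terms are identical and cancel upon addition, yielding
\[
\int_{\tau_1}^{\tau_2}\intO{\left[\left<\mathcal{V}^\cup_{t,x};\tilde S\right>\partial_t\varphi+\left<\mathcal{V}^\cup_{t,x};\tilde S\frac{\tilde\vm}{\tilde\vr}\right>\cdot\Grad\varphi\right]}\le\left[\intO{\left<\mathcal{V}^\cup_{t,x};\tilde S\right>\varphi}\right]_{t=\tau_1-}^{t=\tau_2+}.
\]
Morally this succeeds because the jump $S^1(T-)\to S^2(0+)$ at $T$ is an entropy \emph{increase}, consistent with \eqref{I7}. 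It remains to note that the glued fields satisfy \eqref{weak}: $\vr,\vc{m}$ are weakly continuous in time (continuity at $T$ was checked above) while $S\in L^\infty(0,\infty;L^\gamma(\Omega))\cap\BV_{\rm weak,loc}([0,\infty);W^{-\ell,2}(\Omega))$, the single additional jump at $T$ being harmless for the $\BV$ property. Together with the construction of $\mathcal{V}^\cup$ this shows that the concatenation is a dissipative solution in the sense of Definition~\ref{DDS1}.
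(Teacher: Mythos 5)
Your proof is correct, and it verifies exactly the conditions required by Definitions \ref{DD1} and \ref{DDS1}; however, the mechanism you use at the junction differs from the paper's. The paper glues the same piecewise objects but bridges the interface with Jensen-type inequalities: convexity and lower semicontinuity of the total energy give $\intO{ \E(\vr^1,\vm^1,S^1)(T-)} \le E_0$, while concavity of $[\vr,S]\mapsto \vr Z\left(\frac{S}{\vr}\right)$ gives
\[
\intO{\vr^1 Z\left(\frac{S^1}{\vr^1}\right)(T-)\,\varphi}\ \ge\ \limsup_{t\to T-}\intO{\left<\mathcal{V}^1_{t,x};\tilde\vr\, Z\left(\frac{\tilde S}{\tilde\vr}\right)\right>\varphi},
\]
which is precisely what is needed to match the measure--valued boundary term produced by the first solution with the \emph{barycentric} initial datum of the second in the renormalized inequality \eqref{D5}; as a by-product this also shows that the junction state $([\vr^1,\vm^1,S^1](T-),E_0)$ satisfies the energy constraint defining $\mathcal{D}_{\rm Euler}$, a fact the paper needs anyway for the continuation axiom {\bf [A4]} to be meaningful. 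You avoid convexity and concavity entirely by working with \eqref{D5c} --- the form actually required by Definition \ref{DD1} --- whose boundary terms are linear in $\tilde S$, hence already barycentric, so after your $s\uparrow T$ limit they cancel exactly against $\left<\mathcal{V}^2_{0-,x};\tilde S\right>=S^1(T-)$. That limiting step is genuinely necessary and you execute it correctly: taking $\tau_2=T$ directly in \eqref{D5c} produces the boundary term $S^1(T+)$, and the only available comparison, $S^1(T+)\ge S^1(T-)$, points the wrong way, so it cannot be substituted. In short, your argument is more elementary and matches the definition literally; the paper's argument buys in addition the concatenation of the renormalized form \eqref{D5} and the admissibility of the junction datum as an element of $\mathcal{D}_{\rm Euler}$.
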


\begin{proof}
The concatenation property for \eqref{D2} and \eqref{D3} is obvious.
As far as \eqref{D4} and \eqref{D5} are concerned we observe the following.
The energy is convex lower semicontinuous, in as much we obtain
\[
\begin{split}
&\intO{ \left[ \frac{1}{2} \frac{ |\vc{m}|^{2} }{\vr} + c_v \vr^\gamma \exp \left( \frac{S}{c_v \vr} \right) \right] (T-) } \\
&\leq \liminf_{t \to T-} 
\intO{ \left< \nu_{t,x} ;  \frac{1}{2} \frac{ |\vc{m}|^{2} }{\vr} + c_v \vr^\gamma \exp \left( \frac{S}{c_v \vr} \right)  \right> }
\leq E_0.
\end{split}
\]
The function 
\[
[\vr, S] \mapsto \vr Z \left( \frac{S}{\vr} \right) 
\]
is concave such that
\[
\intO{ \vr Z \left( \frac{S}{\vr} \right)(T-) \varphi } \geq 
\limsup_{t \to T-} \intO{ \left< \mathcal{V}_{t,x}; \vr Z \left( \frac{S}{\vr} \right) \right> \varphi }.
\]
So, \eqref{D4} and \eqref{D5} have the concatenation property as well.
\end{proof}

\begin{Lemma}\label{LDS3}
The mapping
\[
\mathcal{U}:( [\vr_0, \vm_0,S_0], E_0)\in\mathcal D_{\rm Euler} \mapsto \mathcal{U}( [\vr_0, \vm_0,S_0], E_0) \in 2^{\mathcal{T}_{\rm Euler}}
\]
is Borel measurable; where $2^{\mathcal{T}_{\rm Euler}}$ is endowed with the Hausdorff topology defined on compact subsets of the metric space
$L^1_{{\rm loc}}([0, \infty); \mathcal{S}_{\rm Euler})$.
\end{Lemma}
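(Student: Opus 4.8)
The plan is to derive Borel measurability from the sequential stability of Proposition \ref{ESP2}, using the standard principle that a set-valued map with nonempty compact values, a closed graph, and range confined to a fixed compact metric space is Borel measurable with respect to the Hausdorff topology; this is the route taken in \cite[Section 5]{BreFeiHof19} following \cite{CorKap}. As a preliminary reduction I would note that $E_0$ is unbounded on $\mathcal{D}_{\rm Euler}$, so I would write $\mathcal{D}_{\rm Euler} = \bigcup_{n \in \mathbb{N}} \mathcal{D}^n$ with $\mathcal{D}^n = \{ ([\vr_0, \vm_0, S_0], E_0) \in \mathcal{D}_{\rm Euler} : E_0 \leq n \}$. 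Each $\mathcal{D}^n$ is a closed, hence Borel, subset of $\mathcal{S}_{\rm Euler}$, and a map that is Borel measurable on each member of a countable Borel cover of its domain is Borel measurable. Hence it suffices to prove measurability of the restriction of $\mathcal{U}$ to each $\mathcal{D}^n$.

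Next I would exhibit a fixed compact set containing all solutions of bounded energy. By Remark \ref{RRR2} and the a~priori bounds \eqref{D6}--\eqref{D8}, every dissipative solution issuing from data in $\mathcal{D}^n$ lies in a fixed ball $B_n$ of $L^\infty(0,\infty; L^\gamma(\Omega)) \times L^\infty(0,\infty; L^{\frac{2\gamma}{\gamma+1}}(\Omega; R^N)) \times L^\infty(0,\infty; L^\gamma(\Omega))$. Combined with the time-equicontinuity of $\vr$ and $\vm$ coming from \eqref{D3a}, \eqref{D4a} and the local $\BV$-bound on $S$ coming from \eqref{D5c}, an Arzel\`a--Ascoli and Helly type argument---precisely the compactness already exploited in Proposition \ref{ESP2}---shows that $B_n$ is contained in a compact subset $\mathcal{K}_n$ of $L^1_{\rm loc}([0,\infty); \mathcal{S}_{\rm Euler})$. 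Thus $\mathcal{U}|_{\mathcal{D}^n}$ takes values in the compact subsets of $\mathcal{K}_n$, and those values are nonempty and compact by Lemma \ref{LDS0}.

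Then I would verify the closed graph property on $\mathcal{D}^n$: if $\mathbb{U}_{0,k} \to \mathbb{U}_0$ in $\mathcal{S}_{\rm Euler}$ with $\mathbb{U}_{0,k} \in \mathcal{D}^n$, and $\mathbb{U}_k \in \mathcal{U}[\mathbb{U}_{0,k}]$ with $\mathbb{U}_k \to \mathbb{U}$ in $L^1_{\rm loc}([0,\infty); \mathcal{S}_{\rm Euler})$, then $\mathbb{U} \in \mathcal{U}[\mathbb{U}_0]$. On the energy-bounded set $\mathcal{D}^n$ the bound \eqref{D6} (and its analogues for momentum and entropy) forces the data into fixed balls of $L^\gamma$, $L^{\frac{2\gamma}{\gamma+1}}$, $L^\gamma$, so that convergence in the $W^{-\ell,2}$-topology of $\mathcal{S}_{\rm Euler}$ upgrades, through the fact that $L^\gamma(\Omega)$ endowed with the weak topology is compactly embedded in $W^{-\ell,2}(\Omega)$, to the weak convergence of the data recorded in \eqref{ES1}. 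Proposition \ref{ESP2} then identifies the limit $\mathbb{U}$ as a dissipative solution with initial datum $\mathbb{U}_0$, i.e. $\mathbb{U} \in \mathcal{U}[\mathbb{U}_0]$. Finally, closed graph together with confinement of the range to the fixed compact set $\mathcal{K}_n$ yields upper semicontinuity of $\mathcal{U}|_{\mathcal{D}^n}$ and, by the abstract closed-graph criterion of \cite[Section 5]{BreFeiHof19}, its Borel measurability into the Hausdorff hyperspace; patching over $n \in \mathbb{N}$ concludes.

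The main obstacle is the closed graph step, and within it the reconciliation of the two topologies on the data: convergence in $\mathcal{S}_{\rm Euler}$ is with respect to the weak $W^{-\ell,2}$-norm, whereas Proposition \ref{ESP2} is phrased in terms of weak-$L^\gamma$ convergence of the data. This is exactly where restricting to the energy-bounded pieces $\mathcal{D}^n$ is essential, since only there do the bounds \eqref{D6}--\eqref{D8} render the two notions compatible. Everything else is bookkeeping: the compactness of the range and the abstract implication that a closed graph with compact range gives measurability are standard and essentially already contained in \cite{BreFeiHof19}.
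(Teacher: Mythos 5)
Your proposal is correct and follows essentially the same route as the paper: the paper's proof is a one-line appeal to the sequential stability of Proposition \ref{ESP2} combined with \cite[Thm. 12.1.8]{StrVar}, which is precisely the abstract compact-values/closed-graph measurability criterion you unpack in detail. Your preliminary decomposition into the energy-bounded pieces $\mathcal{D}^n$ is harmless but not actually needed, since along any sequence of data converging in $\mathcal{S}_{\rm Euler}$ the energies $E_{0,k}$ converge in $R$ and are therefore automatically bounded, so the hypotheses of Proposition \ref{ESP2} are available without any localization.
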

\begin{proof}
As a consequence of the sequential stability from Propositions \ref{ESP2} the claim follows from \cite[Thm. 12.1.8]{StrVar}.
\end{proof}

\begin{proof}[Proof of Theorem \ref{ST1}]
In view of Proposition \ref{dDP1}, the first claim of Theorem \ref{ST1} - the existence of the semiflow $\mathbb{U}$ - follows now from Lemmas~\ref{LDS0}--\ref{LDS3}. Let us finally explain why $\mathbb{U}([ \vr_0, \vm_0,S_0], E_0 )$ can be selected to be maximal in the sense of Definition \ref{DD2}. As stated in Proposition \ref{dDP1} the selection can be chosen to satisfy
\[
\int_0^\infty \exp(- t) \beta (\mathbb{U}([\vr_0, \vm_0,S_0], E_0) (t)) \ \dt
\leq \int_0^\infty \exp(- t) \beta (([\tilde\vr, \tilde{\vc{m}},\tilde S],E_0)(t)) \ \dt.
\]
for any $([\tilde\vr, \tilde{\vc{m}},\tilde S],E_0) \in \mathcal{U}([\vr_0, \vm_0,S_0], E_0)$.
Here $\beta : \mathcal{S}_{\rm Euler} \to R$ is a bounded and continuous function. 
%$$
%\int_{0}^{\infty}e^{-\lambda t}\beta\left(\int_{\Omega}S(t,x)-S_{0}(x)\dif x\right)\dif t
%$$ 
We suppose that 
\begin{align*}
\beta(\vr,\vm,S,E)=\alpha\left(\int_{\Omega}S\,\dif x\right),
\end{align*} 
where $\alpha:R\rightarrow R$ is smooth, bounded and strictly decreasing.
We proceed by contradiction.
Let $([\tvr, \tvm,\tilde S],{E}_{0}) \in \mathcal{U}([\vr_0, \vm_0,S_0], E_0)$ be such that
$
([\tvr, \tvm,\tilde S], {E}_{0}) \succ ([\vr, \vm, S],E_{0}),
$ that is,
$
\int_\Omega\tilde{S}\dx \geq \int_\Omega S\dx
$ in $(0,\infty),$ where we denote $([\vr, \vm, S],E_{0})=\mathbb U([\vr_0, \vm_0,S_0], E_0)$.
Then we get
\[
\alpha\left(\int_\Omega\tilde{S}\dx\right) \leq \alpha\left(\int_\Omega{S}\dx\right) \ \mbox{and}\ \int_0^\infty \exp(-t) \left[ \alpha\left(\int_\Omega{S}\dx\right)-\alpha\left(\int_\Omega\tilde{S}\dx\right)  \right] \dt \leq 0;
\]
whence $\int_\Omega S\dx = \int_\Omega\tilde S\dx$ a.a. in $(0,\infty)$ since $\alpha$ is strictly decreasing. Hence $\mathbb{U}([\vr_0, \vm_0,S_0], E_0)$ is maximal with respect to $\succ$. 
The proof of Theorem \ref{ST1} is hereby complete.
\end{proof}

\section{Concluding discussion}
\label{D}

We have shown the existence of a semiflow solution to the complete Euler system in the class of dissipative solutions. The 
semiflow solution has been selected among other solutions starting from the same initial data. The major issue is, of course, 
uniqueness of such a selection. In accordance with Proposition \ref{DPP2} the selected solution is unique and coincides with 
the strong solutions emanating from the same initial state as long as the latter exists. There is a special class of strong solutions - the equilibrium states. As we shall see below, they enjoy certain stability properties thanks to the fact that the selected solutions
produce the maximal amount of entropy.  

\subsection{Stability of equilibrium states}

The equilibrium states are stationary solutions to the Euler system \eqref{I1}. Specifically, the momentum vanishes 
identically $\vm \equiv 0$, while the density $\vr = \Ov{\vr} > 0$ and the entropy $S = \Ov{S}$ are constant. Note that 
$\Ov{\vr}$ is uniquely determined by the total mass 
\[
M = \Ov{\vr} |\Omega| = \intO{ \vr(t,x) } = \intO{ \vr_0(x) }\ \mbox{for any}\ t > 0.  
\]
Given $\Ov{\vr}$ and the total energy $E_0$, the (constant) equilibrium entropy $\Ov{S}$ is the unique maximizer of the total entropy 
among all states with given mass and energy:
\[ 
\intO{ \Ov{S} } = \sup \left\{ \intO{ S } \ \Big|\, \vr, S \in L^1(\Omega),\, \vr \geq 0,
\, \intO{ c_v {\vr}^\gamma \exp \left( \frac{S}{c_v {\vr}} \right) } = E_0,\ \intO{ \vr } = M \right\}.
\]
Indeed, suppose that $\Ov{\vr}$, $\Ov{S}$ are constant, and 
\[
\intO{ \Ov{\vr} } = \intO{\vr} = M,\ \intO{ \Ov{S} } \leq \intO{S},\ 
\intO{ \Ov{\vr} e(\Ov{\vr}, \Ov{S}) } = \intO{ \vr e(\vr, S) } = E_0.
\]
Normalizing the above relation by a factor $\frac{1}{|\Omega|}$ we may suppose $|\Omega| = 1$. Using convexity of the 
function $\Phi (\vr, S) \equiv \vr e(\vr, S)$ (cf. Lemma \ref{LC1}) and Jensen's inequality, we get 
\[
E_0 = \intO{ \Phi(\Ov{\vr}, \Ov{S}) } = \Phi \left( \intO{ \Ov{\vr} }, \intO{\Ov{S}} \right) \leq
\Phi \left( \intO{ {\vr} }, \intO{{S}} \right) \leq \intO{ \Phi (\vr, S) } = E_0. 
\]
As $\Phi$ is strictly increasing in $S$ and strictly convex in $(\vr,S)$, we get successively $\intO{S} = \intO{\Ov{S}}$, and 
$\vr = \Ov{\vr}$, $S = \Ov{S}$.

We claim that a \emph{maximal solution} can only see the equilibria with the energy $E_0$. 

\begin{Proposition}[Stability of equilibria] \label{Padm}

Let $([\vr, \vm, S], E_0)$ be a dissipative solution to the Euler system emanating from the initial data 
\[
\vr_0 = \Ov{\vr} - \ \mbox{a positive constant},\ \vm_0 = 0,\ S_0  - \mbox{a constant}. 
\]
Suppose that $([\vr, \vm, S], E_0)$ is maximal in the sense of Definition \ref{DD2}.

Then 
\begin{equation} \label{max}
\vr = \Ov{\vr},\ \vm = 0,\ S = \Ov{S},\ \mbox{where} \ \intO{ c_v \Ov{\vr}^\gamma \exp \left( \frac{S}{c_v \Ov{\vr}} \right) }
= \Ov{S}.
\end{equation}

\end{Proposition}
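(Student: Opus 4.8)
The strategy is to exhibit the constant equilibrium state as an explicit competing dissipative solution and then to invoke the maximality of $([\vr,\vm,S],E_0)$. Throughout I normalize $|\Omega|=1$, so that $\Ov{S}$ is the unique constant determined by the energy constraint $c_v\Ov{\vr}^\gamma\exp(\Ov{S}/(c_v\Ov{\vr}))=E_0$. The first step is an a~priori ceiling on the total entropy. Writing $\Phi(\vr,S):=c_v\vr^\gamma\exp(S/(c_v\vr))=\vr e(\vr,S)$, which is strictly convex by Lemma~\ref{LC1} (since $p=(\gamma-1)\Phi$), mass conservation \eqref{D3a} gives $\intO{\vr(\tau)}=\intO{\vr_0}=\Ov{\vr}$, while the energy balance \eqref{D2}, the nonnegativity of the concentration defects, and Jensen's inequality applied first to the representing measure $\mathcal{V}_{\tau,x}$ (as in \eqref{eq:J1}) and then to the spatial mean over $\Omega$ give $\Phi(\intO{\vr(\tau)},\intO{S(\tau)})\le\intO{\Phi(\vr(\tau),S(\tau))}\le E_0$. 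Since $\intO{\vr(\tau)}=\Ov{\vr}$ and $S\mapsto\Phi(\Ov{\vr},S)$ is strictly increasing, this reads $\Phi(\Ov{\vr},\intO{S(\tau)})\le\Phi(\Ov{\vr},\Ov{S})$, i.e.\ $\intO{S(\tau)}\le\Ov{S}$ for a.a.\ $\tau$.

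Next I would verify that the stationary state represented by $\mathcal{V}_{t,x}=\delta_{[\Ov{\vr},0,\Ov{S}]}$ with vanishing concentration defects is a dissipative solution emanating from $([\Ov{\vr},0,S_0],E_0)$. The continuity and momentum balances \eqref{D3a}, \eqref{D4a} hold because all fields are constant in $(t,x)$ and $\Grad p(\Ov{\vr},\Ov{S})=0$, and \eqref{D2} holds with equality and no defect by the defining relation for $\Ov{S}$. The only genuine point is the entropy inequality \eqref{D5c} across the initial jump from $S(0-)=S_0$ to $S(0+)=\Ov{S}$: taking $\tau_1=0$ and $\varphi\ge0$, the constant interior contribution cancels and the inequality collapses to $(\Ov{S}-S_0)\intO{\varphi(0)}\ge0$. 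This holds precisely because $S_0\le\Ov{S}$, which follows from the admissibility $\intO{\Phi(\Ov{\vr},S_0)}\le E_0=\intO{\Phi(\Ov{\vr},\Ov{S})}$ built into $\mathcal{D}_{\rm Euler}$ together with the strict monotonicity of $\Phi(\Ov{\vr},\cdot)$.

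This equilibrium competitor has entropy production $\sigma_{\rm eq}(\tau)=\intO{(\Ov{S}-S_0)}$ for every $\tau>0$, so the ceiling of the first paragraph gives $\sigma(\tau\pm)\le\sigma_{\rm eq}(\tau\pm)$ for all $\tau$ (the a.a.\ bound propagating to the one-sided limits since $\tau\mapsto\intO{S(\tau)}$ is nondecreasing), that is $([\Ov{\vr},0,\Ov{S}],E_0)\succ([\vr,\vm,S],E_0)$. Maximality in the sense of Definition~\ref{DD2} then forces $\sigma=\sigma_{\rm eq}$, i.e.\ $\intO{S(\tau)}=\Ov{S}$ for every $\tau$. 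Feeding this equality back into the Jensen chain of the first paragraph turns every inequality there into an equality; in particular $\Phi(\intO{\vr(\tau)},\intO{S(\tau)})=\intO{\Phi(\vr(\tau),S(\tau))}$, so by strict convexity of $\Phi$ the field $(\vr(\tau,\cdot),S(\tau,\cdot))$ must coincide with its spatial mean $(\Ov{\vr},\Ov{S})$ for a.a.\ $x$. Since then $\intO{\Phi(\vr(\tau),S(\tau))}=E_0$ saturates \eqref{D2} through the internal energy alone, the nonnegative kinetic energy $\intO{\tfrac12|\vm(\tau)|^2/\vr(\tau)}$ and the defects must vanish, whence $\vm(\tau,\cdot)=0$. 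These identities hold for a.a.\ $\tau$ and extend to all $\tau$ by the weak time-continuity of $\vr$ and $\vm$ from \eqref{weak1}, yielding \eqref{max}.

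The step I expect to be most delicate is the verification in the second paragraph that the constant equilibrium truly qualifies as a dissipative solution --- specifically that the renormalized entropy inequality \eqref{D5c} accommodates the instantaneous initial jump from $S_0$ to $\Ov{S}$ in the correct distributional sense, its admissibility being exactly the inequality $S_0\le\Ov{S}$ forced by the energy constraint. Everything else reduces to Jensen's inequality and its equality case, which rest on the strict convexity of $\Phi$ from Lemma~\ref{LC1}.
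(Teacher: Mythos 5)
Your proposal is correct and follows essentially the same route as the paper: exhibit the constant equilibrium (admissible since $S_0\le\Ov S$) as a competing dissipative solution whose entropy production dominates, invoke maximality to force $\sigma=\sigma_{\rm eq}$, and then use strict convexity of $\vr e(\vr,S)$ with Jensen's inequality to identify $\vr=\Ov\vr$, $S=\Ov S$, $\vm=0$. The paper's own proof is just a two-sentence sketch resting on the entropy-maximization computation preceding the proposition; your write-up supplies the details (the Jensen ceiling, the verification of \eqref{D5c} across the initial jump, and the vanishing of kinetic energy and defects) that the paper leaves implicit.
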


\begin{proof}

As the equilibrium solution maximizes the total entropy, we have $\Ov{S} \geq S_0$; whence \eqref{max} is a dissipative solution. 
By the same token, it is a maximal solution and any other solution ``larger'' in the sense of $\succ$ has the same entropy, meaning it concides with the solution \eqref{max}.
\end{proof}

\subsection{Dafermos' admissibility criteria}\label{sec:D}

Finally, we discuss some implications of Dafermos' admissibility criteria introduced in Section \ref{AD}. 
As we have seen in the proof of Theorem~\ref{ST1}, maximimality of the entropy production by dissipative solutions has been enforced by maximizing the integral 
\[
\int_0^\infty \exp(-t) \intO{ S(t,x) } \dt  
\]
modulo the cut--off function $\alpha$. Actually the same result could have been achieved by maximizing 
\begin{align}\label{eq:1403b}
\int_0^\infty \exp(- \lambda t) \intO{ S(t,x) } \dt,\ \lambda > 0.  
\end{align}
Thus, a natural question arises, namely, what would be the resulting solution if we maximized successively the entropy 
for a family $\lambda_n \to \infty$.
The following result implies: if there is a solution such that it has maximal Laplace transform of the total entropy evaluated at every $\lambda_{n}$ from a sequence $\lambda_{n}\to\infty$, then it satisfies the entropy production criterium by Dafermos in the form \eqref{eq:D}, that is, it is maximal with respect to  the partial ordering $\succ_{\mathcal{D}}$.\\
With this motivation in mind, we employ the following notation for  $F,G\in L^{\infty} (0,\infty)\cap \BV_{\rm loc}([0,\infty))$: We say  that $F\succ_{\mathcal D}G$ provided there exists $\delta>0$ such that $F(t+)\geqslant G(t+)$ for all $t\in (0,\delta)$; and we say that $F\succ_{\mathcal F}G$ provided there exists a sequence $(\tau_{n})_{n}$, $\tau_{n}>0$, $\tau_{n}\to 0$  such that $F(\tau_{n}+)\geqslant G(\tau_{n}+)$.

\begin{Lemma}\label{lem:10}
Let $\mathcal{G}\subset L^{\infty}(0,\infty)\cap \BV_{\rm loc}([0,\infty))$ and assume  that there exists $F \in \mathcal{G}$ such that for some sequence $\lambda_{n}\to\infty$ it holds
  \[ \lambda_{n} \int_0^{\infty} \exp(- \lambda_{n} t) F (t)\, \mathd t \geqslant \lambda_{n}
     \int_0^{\infty} \exp(- \lambda_{n} t) G (t) \,\mathd t \qquad \tmop{for}
     \tmop{ all }  G \in \mathcal{G} \text{ and } n\in\mathbb{N}. \]
  Then for every $G\in\mathcal{G}\setminus\{F\}$ one of the following cases holds: either 
  $ F \succ_{\mathcal{D}} G$
   or $F \sim_{\mathcal{F}} G$, meaning, in the latter case it holds simultaneously $F \succ_{\mathcal{F}} G$ and $G \succ_{\mathcal{F}} F$. Accordingly, $F$ is maximal with respect to $\succ_{\mathcal{D}}$.
%
%
%
%  Let $F_0 \in R$ and consider  $\mathcal{G}\subset L^{\infty}(0,\infty)$ 
%  of
%  right-continuous functions  such that $G (0) = F_0$ for all $G \in
%  \mathcal{G}$. Assume that there exists $F \in \mathcal{G}$ such that for some sequence $\lambda_{n}\to\infty$ it holds
%  \[ \lambda_{n} \int_0^{\infty} \exp(- \lambda_{n} t) F (t)\, \mathd t \geqslant \lambda_{n}
%     \int_0^{\infty} \exp(- \lambda_{n} t) G (t) \,\mathd t \qquad \tmop{for}
%     \tmop{ all }  G \in \mathcal{G} \text{ and } n\in\mathbb{N}. \]
%  Then for every $G\in\mathcal{G}$ there exists $\delta=\delta(G)>0$ such that
%  $ F (t) \geqslant  G(t)$ for all $t\in (0,\delta)
%   $. 
\end{Lemma}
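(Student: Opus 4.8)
The plan is to set $H:=F-G$, which again lies in $L^\infty(0,\infty)\cap\BV_{\rm loc}([0,\infty))$, and to rephrase the hypothesis as
\[
\Phi(\lambda):=\lambda\int_0^\infty\exp(-\lambda t)\,H(t)\,\mathrm{d}t\geq 0\qquad\text{for }\lambda=\lambda_n,\ n\in\mathbb{N}.
\]
Since $H$ is of bounded variation it admits one-sided limits at every point, the map $t\mapsto H(t+)$ is right-continuous and bounded, and $H(t)=H(t+)$ for a.a.\ $t$, so $\Phi(\lambda)$ is unchanged if $H$ is replaced by $H(\cdot+)$. With this notation $F\succ_{\mathcal{D}}G$ reads ``$H(t+)\geq0$ on some interval $(0,\delta)$'' and $F\succ_{\mathcal{F}}G$ reads ``$H(\tau_n+)\geq0$ along some $\tau_n\downarrow0$''.

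The heart of the argument is the following quantitative sublemma: if there is $\delta>0$ with $H(t+)\leq0$ for all $t\in(0,\delta)$ and $H(t_1+)<0$ for some $t_1\in(0,\delta)$, then $\Phi(\lambda)<0$ for all large $\lambda$. To prove it I would use right-continuity of $H(\cdot+)$ at $t_1$ to fix $\eta>0$ and $c>0$ with $t_1+\eta<\delta$ and $H(t+)\leq-c$ on $[t_1,t_1+\eta]$, split $\Phi(\lambda)=\lambda\int_0^\delta e^{-\lambda t}H(t+)\,\mathrm{d}t+\lambda\int_\delta^\infty e^{-\lambda t}H(t)\,\mathrm{d}t$, bound the first integral from above by $-c\big(e^{-\lambda t_1}-e^{-\lambda(t_1+\eta)}\big)$ (discarding the rest of $(0,\delta)$, where the integrand is $\leq0$) and the tail in absolute value by $\|H\|_\infty e^{-\lambda\delta}$. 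Multiplying by $e^{\lambda t_1}$ gives $e^{\lambda t_1}\Phi(\lambda)\leq-c(1-e^{-\lambda\eta})+\|H\|_\infty e^{-\lambda(\delta-t_1)}\to-c<0$, since $\delta-t_1>0$. The decisive point, which I expect to be the only genuine obstacle, is exactly this comparison of decay rates: although the negativity of $H$ near $0$ may degenerate (indeed $H(0+)$ may vanish), placing the test interval $[t_1,t_1+\eta]$ strictly inside $(0,\delta)$ produces a contribution of order $e^{-\lambda t_1}$ that dominates the tail of order $e^{-\lambda\delta}$.

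From the sublemma the assertions follow quickly. First, $F\succ_{\mathcal{F}}G$ always holds: if it failed there would be $\delta>0$ with $H(t+)<0$ on all of $(0,\delta)$, and the sublemma would give $\Phi(\lambda_n)<0$ for large $n$, contradicting the hypothesis. Next I would establish the dichotomy: if $F\succ_{\mathcal{D}}G$ we are in the first case; otherwise, by definition every $\delta>0$ contains a point of $(0,\delta)$ with $H(t+)<0$, yielding a sequence $\tau_n\downarrow0$ with $H(\tau_n+)<0$, i.e.\ $G\succ_{\mathcal{F}}F$, which together with $F\succ_{\mathcal{F}}G$ gives $F\sim_{\mathcal{F}}G$, the second case.

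Finally, for maximality with respect to $\succ_{\mathcal{D}}$ I would show that $F$ cannot be strictly dominated. Suppose $G\succ_{\mathcal{D}}F$ for some $G\in\mathcal{G}\setminus\{F\}$, i.e.\ $H(t+)\leq0$ on some $(0,\delta)$. Were $H(t_1+)<0$ at some $t_1\in(0,\delta)$, the sublemma would again force $\Phi(\lambda_n)<0$, a contradiction; hence $H(t+)=0$, that is $F(t+)=G(t+)$, throughout $(0,\delta)$, so that $F\succ_{\mathcal{D}}G$ as well. Thus no $G$ dominates $F$ strictly, which is precisely the maximality of $F$ for $\succ_{\mathcal{D}}$.
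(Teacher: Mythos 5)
Your proof is correct, and its skeleton is the one the paper uses: compare Laplace transforms, split the integral at $\delta$, play the decay rate of the near--origin contribution against the $e^{-\lambda\delta}$ tail, and organize the conclusion as a dichotomy according to whether $F \succ_{\mathcal{D}} G$ holds. There are, however, two genuine differences. First, the key estimate. The paper keeps the whole interval and exploits the prefactor $\lambda_n$: it bounds $\lambda_n\int_0^\delta e^{-\lambda_n t}\,[G(t)-F(t)]\,\mathrm{d}t \geq \lambda_n e^{-\lambda_n\delta}\int_0^\delta [G(t)-F(t)]\,\mathrm{d}t$, which dominates the tail bound $(\|F\|_{L^\infty}+\|G\|_{L^\infty})e^{-\lambda_n\delta}$ once $n$ is large; this is applied only in the case $F(t+)<G(t+)$ for \emph{all} $t\in(0,\delta)$. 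Your sublemma instead localizes at an interior point $t_1$ with $H(t_1+)<0$ and compares $e^{-\lambda t_1}$ against $e^{-\lambda\delta}$, so it needs only $H(\cdot\,+)\leq 0$ on $(0,\delta)$ together with strict negativity at a \emph{single} point. Second --- and this is what the refinement buys --- the final maximality claim. The paper deduces it from the dichotomy combined with the remark of Section \ref{AD} that maximality with respect to $\succ_{\mathcal{F}}$ implies maximality with respect to $\succ_{\mathcal{D}}$; taken on its own, that remark does not settle the delicate case where $G\succ_{\mathcal{D}}F$ while the dichotomy only yields $F\sim_{\mathcal{F}}G$, since mutual $\mathcal{F}$-comparability (equality of right limits along a sequence $\tau_n\downarrow 0$) does not upgrade to the interval-wise inequality defining $\succ_{\mathcal{D}}$. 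Your stronger sublemma closes precisely this case: $G\succ_{\mathcal{D}}F$ together with the Laplace hypothesis forces $H(t+)=0$ on a whole interval $(0,\delta)$, hence $F\succ_{\mathcal{D}}G$, so no competitor strictly dominates $F$. In this respect your argument is self-contained and in fact tightens the paper's final step.
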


%\textcolor{blue}{QUESTION: i would like to believe that the claim 2 (or case 2 in the proof below) cannot happen if $F$ wins all the functionals, but i don't see how to prove it. maybe one needs sth more to prove that, maybe also some more info about $(\lambda_{n})$. what do you think??}

\begin{proof}
Fix an arbitrary  $G\in\mathcal{G}\setminus\{F\}$ and assume that the first claim $ F \succ_{\mathcal{D}} G$ is not valid. That is, for every $\delta>0$ there exists $t\in (0,\delta)$ such that $F(t+)< G(t+)$ This directly implies $G\succ_{\mathcal{F}}F$. So it remains to show the converse statement, i.e.  $F\succ_{\mathcal{F}}G$.

{\em Case 1:} If there is a $\delta>0$  such that $F(t+)<G(t+)$ for all $t\in (0,\delta)$ then we write
  \[ \lambda_{n} \int_0^{\infty} \exp(- \lambda_{n} t)  [G (t) - F (t)] \,\mathd t 
  \]
  \[=
     \lambda_{n} \int_0^{\delta} \exp(- \lambda_{n} t)  [G (t) - F (t)] \,\mathd t +
     \lambda_{n} \int_{\delta}^{\infty} \exp(- \lambda_{n} t)  [G (t) - F (t)] \,\mathd t
  \]
  \[ \geqslant \lambda_{n} \exp(- \lambda_{n} \delta) \int_0^{\delta}  [G (t) - F (t)]
     \,\mathd t - (\| F \|_{L^{\infty}} + \| G \|_{L^{\infty}}) \exp(- \lambda_{n}
     \delta), \]
  where the right hand side is strictly positive provided $n$ was chosen
  sufficiently large. This is a contradiction with the maximality of the corresponding Laplace transforms of $F$.
  
{\em Case 2:} If that is not the case, then the functions $F,G$ oscillate around each other in the sense that for every $\delta>0$ there exists $t\in (0,\delta)$ such that $F(t+)\geqslant G(t+)$. Hence $F\succ_{\mathcal{F}}G$ and the second claim, namely, $F\sim_{\mathcal{F}}G$, is valid.
%%
%%{\em Case 1:} If $F(0+)> G(0+)$ then since the functions have bounded variation there exists $\delta>0$ such that $ F (t+) > G(t+)$ for all $t\in (0,\delta) $ and the claim follows.
%%{\em Case 2:} 
%{\em Case 3:} Finally, if  $F(0+)= G(0+)$ then there exists $\delta>0$ such that either $F (t+) \geqslant G(t+)$ or $F (t+) < G(t+)$ for all  $t\in(0,\delta)$. In the former case the claim follows, whereas in the latter case the proof of {\em Case 2} applies.}
%If we have oscillations, there exists a sequence $s_{k}\to 0$ such that $F<G$ on $(s_{2k},s_{2k-1})$ and $F>G$ on $(s_{2k-1},s_{2k-2})$. Then 
%\[
% \lambda_{n} \int_0^{\infty} \exp(- \lambda_{n} t)  [G (t) - F (t)] \,\mathd t 
%  \]
%  \[=
%     \lambda_{n} \sum_{k}\int_{s_{2k}}^{s_{2k-1}} \exp(- \lambda_{n} t)  [G (t) - F (t)] \,\mathd t +\lambda_{n} \sum_{k}\int_{s_{2k-1}}^{s_{2k-2}} \exp(- \lambda_{n} t)  [G (t) - F (t)] \,\mathd t
%     \]
%     \[ +
%     \lambda_{n} \int_{s_{1}}^{\infty} \exp(- \lambda_{n} t)  [G (t) - F (t)] \,\mathd t
%  \]
%   \[\geqslant
%     \lambda_{n} \sum_{k}\exp(- \lambda_{n} s_{2k-1}) \int_{s_{2k}}^{s_{2k-1}}  [G (t) - F (t)] \,\mathd t 
%     \]
%     \[ -  \sum_{k} \|G-F\|_{L^{\infty}(s_{2k-1},s_{2k-2})}(\exp(-\lambda_{n}s_{2k-2})-\exp(-\lambda_{n}s_{2k-1}))- (\| F \|_{L^{\infty}} + \| G \|_{L^{\infty}})\exp(- \lambda_{n}
%     s_{1}) ,  \]
%%  \[ \geqslant \lambda_{n} \exp(- \lambda_{n} \delta) \int_0^{\delta}  [G (t) - F (t)]
%%     \,\mathd t - (\| F \|_{L^{\infty}} + \| G \|_{L^{\infty}}) \exp(- \lambda_{n}
%%     \delta), \]

Finally, as already observed in Section \ref{AD}, maximality with respect to $\succ_{\mathcal F}$ implies maximality with respect to $\succ_{\mathcal D}$. Thus, $F$ is maximal with respect to $\succ_{\mathcal{D}}$ and the proof is complete.
\end{proof}

In other words, if for some initial data the Euler system \eqref{I1}--\eqref{I4} possesses a solution whose total entropy has maximal Laplace transforms evaluated at a sequence $\lambda_{n}\to\infty$, then the Dafermos' criterium is satisfied. Note that since we have a semigroup it is enough to test the  criterium at the time $t=0$.

We now argue that such a solution can be always chosen by our selection process considering a suitable order of minimizers in the 
procedure described in \cite[Section 5.1]{BreFeiHof19}. The latter one considers bounded continuous functionals $\beta:\mathcal S\rightarrow R$ on the phase space. The final selection $\mathbb U$ maximizes (or minimizes) $\beta(\mathbb U)$ pointwise in time
by selecting the maximizer (or minimizer) of the functionals
\begin{align}\label{eq:1403}
I_n[\mathbb V]=\int_0^\infty \exp(-\lambda_{n}t) \beta(\mathbb V) \dt,\ \mathbb V\in \BV_{\rm loc}([0,\infty);\mathcal S),
\end{align}
from the solution set.
In \cite[Section 5.1]{BreFeiHof19}, the sequence $(\lambda_n)_{n}$ is chosen to be dense in $(0,\infty)$. However, the density of $(\lambda_n)_{n}$ is only needed to apply Lerch's theorem implying the uniqueness of the Laplace transform. In fact, it is sufficient that the family of functionals $I_n$ in \eqref{eq:1403} separates points, which can be achieved by choosing a suitable increasing sequence $\lambda_{n} \to\infty$.
This follows from the following generalization of classical Lerch's theorem. Consequently, it is enough that the total entropy of our selection maximizes \eqref{eq:1403b} for $\lambda=\lambda_n$, $n\in \mathbb{N}$.

\begin{Lemma}
  Let $\lambda > 0$ and $(\zeta_n)_{n} \subset (0, \infty)$
  be a sequence with the following property: for all $n, m \in \mathbb{N}$
  there exists $k \in \mathbb{N}$ such that $\zeta_n + \zeta_m =
  \zeta_k$. Then the set of functionals
  \[ L^{\infty} (0, \infty) \rightarrow \mathbb{R}, \quad F \mapsto
     \int_0^{\infty} \exp(- (\lambda + \zeta_n) t) F (t) \,\mathd t, \qquad n
     \in \mathbb{N}, \]
  separates points.
\end{Lemma}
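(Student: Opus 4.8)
The plan is to argue by linearity: it suffices to show that if $H=F-G\in L^\infty(0,\infty)$ satisfies
\[
\int_0^\infty \exp(-(\lambda+\zeta_n)t)\,H(t)\,\mathd t=0\qquad\text{for all }n\in\mathbb{N},
\]
then $H=0$ almost everywhere. The whole point of the additive closure hypothesis is that it lets me extract from $(\zeta_n)$ a full arithmetic progression of exponents, after which the statement reduces to the classical Lerch uniqueness theorem.

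Concretely, I would first fix $c:=\zeta_1>0$ and prove by induction that $kc\in\{\zeta_n:n\in\mathbb{N}\}$ for every $k\in\mathbb{N}$. The base case is clear, and if $kc=\zeta_m$ for some index $m$, then applying the hypothesis to the pair $(m,1)$ yields an index $j$ with $\zeta_j=\zeta_m+\zeta_1=(k+1)c$. Consequently the assumption gives
\[
\int_0^\infty \exp(-(\lambda+kc)t)\,H(t)\,\mathd t=0\qquad\text{for all }k\in\mathbb{N},
\]
so the Laplace transform of $H$ vanishes along the arithmetic progression $\lambda+kc$.

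Next I would turn this into a moment problem through the substitution $x=\exp(-ct)$, under which $t=-\tfrac1c\log x$ and the integrals above become
\[
\int_0^\infty \exp(-(\lambda+kc)t)\,H(t)\,\mathd t=\frac1c\int_0^1 x^{\lambda/c+k-1}\,H\!\left(-\tfrac1c\log x\right)\,\mathd x.
\]
Setting $\Phi(x):=x^{\lambda/c-1}H(-\tfrac1c\log x)$, the bound $|H|\le\|H\|_{L^\infty}$ together with $\lambda>0$ ensures $\Phi\in L^1(0,1)$, and the vanishing of the transforms reads $\int_0^1 x^k\Phi(x)\,\mathd x=0$ for all $k\ge1$. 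Writing $\Psi(x):=x\Phi(x)\in L^1(0,1)$, this is exactly $\int_0^1 x^j\Psi(x)\,\mathd x=0$ for all $j\ge0$, i.e.\ all moments of $\Psi$ vanish. By the Weierstrass approximation theorem the polynomials are dense in $C([0,1])$, so $\int_0^1 g\,\Psi\,\mathd x=0$ for every $g\in C([0,1])$; since $\Psi\in L^1(0,1)$ this forces $\Psi=0$ almost everywhere (e.g.\ by the Riesz representation theorem, or by approximating $\mathrm{sign}\,\Psi$ and using dominated convergence), whence $\Phi=0$ and finally $H=0$ almost everywhere.

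The only genuine subtlety---and thus the step I would treat most carefully---is the integrability $\Phi\in L^1(0,1)$, which is precisely where the hypothesis $\lambda>0$ enters, since it makes the exponent $\lambda/c-1>-1$ integrable near the origin; a second minor point is that the change of variables produces only the moments of order $k\ge1$, a gap that is harmless because factoring out a single power of $x$ converts it into the full moment sequence of $\Psi=x\Phi$. Alternatively, once the progression $\lambda+kc$ has been extracted in the second step, one may simply invoke the classical Lerch uniqueness theorem for Laplace transforms sampled along an arithmetic progression and dispense with the explicit moment computation altogether.
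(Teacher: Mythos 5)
Your proof is correct, but it follows a genuinely different route from the paper. The paper exploits the \emph{full} additive closure of $(\zeta_n)_n$: it makes the span of $(e^{-\zeta_n t})_n$ a subalgebra of $C_0([0,\infty))$ that separates points and vanishes nowhere, invokes the locally compact Stone--Weierstrass theorem to get density in $C_0([0,\infty))$ (and then in $L^1(0,\infty)$), and concludes by $L^1$--$L^\infty$ duality. You instead use only a small fragment of the hypothesis --- the inductive step $\zeta_m+\zeta_1=\zeta_j$, which puts the arithmetic progression $k\zeta_1$, $k\in\mathbb{N}$, inside $\{\zeta_n\}$ --- then transport the vanishing of the Laplace transform along $\lambda+k\zeta_1$ to the Hausdorff moment problem on $[0,1]$ via $x=e^{-ct}$, and finish with classical Weierstrass approximation and Riesz representation. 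Your argument is more elementary (no Stone--Weierstrass for locally compact spaces) and it proves a formally sharper statement: the functionals indexed by the single arithmetic progression $\lambda+k\zeta_1$ already separate points, which is exactly the observation the paper records only afterwards, in the remark that a minimal sequence for Lerch's theorem is of the form $\lambda_n=\lambda+n\zeta$. A further small advantage of your route is that the change of variables moves the problem to the finite interval $[0,1]$, so you never need the density of the exponential algebra in $L^1(0,\infty)$, a step the paper passes over with ``consequently'' and which on an infinite interval requires an extra weighting argument (uniform density in $C_0$ does not by itself give $L^1$ density). Your handling of the two delicate points --- integrability of $\Phi$ near $x=0$ from $\lambda/c-1>-1$, and the shift from moments of order $k\ge 1$ to the full moment sequence via $\Psi=x\Phi$ --- is accurate.
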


\begin{proof}
  Due to the condition on the sequence $(\zeta_n)_n$, the set of finite
  linear combinations of functions $(e^{- \zeta_n t})_n$ forms a subalgebra
  of $C_0 ([0, \infty))$ which separates points and vanishes nowhere. Hence by
  the locally compact version of the Stone--Weierstrass theorem it is dense in
  $C_0 ([0, \infty))$ and consequently dense in $L^1 (0, \infty)$. The claim
  now follows since $L^1 (0, \infty)$ is the predual of $L^{\infty} (0,
  \infty)$: let $F$ be such that all the functionals above are zero. Then for
  every $g \in L^1 (0, \infty)$
  \[ \int_0^{\infty} g (t) \exp(- \lambda t) F (t) \,\mathd t = 0 \]
  and consequently $e^{- \lambda t} F (t) = 0$ for a.e. $t \in (0, \infty)$
  hence $F = 0$.
\end{proof}

From the above Lemma we see that a minimal sequence $(\lambda_{n})_{n}$ needed for Lerch's theorem is an arithmetic sequence of the form $\lambda_{n}=\lambda+n\zeta$ where $\lambda,\zeta>0$.
For completeness we note that the converse implication regarding the Dafermos' criterium   is not valid. More precisely, if a function satisfies the Dafermos' criterium \eqref{eq:D} then it will not necessarily be chosen by our selection. This observation is based on the following result.

\begin{Lemma}
 Let $\mathcal{G}\subset L^{\infty}(0,\infty)\cap \BV_{\rm loc}([0,\infty))$ and assume that $F\in \mathcal{G}$ satisfies $ F\succ_{\mathcal{D}}  G$
  for every $G\in\mathcal{G}$. 
%  \[ \frac{\mathd^+ F}{\mathd t} (0) > \frac{\mathd^+ G}{\mathd t} (0) . \]
  Then there exists $\lambda_0=\lambda_{0}(F,G) > 0$ such that for all $\lambda \geqslant
  \lambda_0$ it holds
  \[ \lambda \int_0^{\infty} \exp(- \lambda t) F (t) \,\mathd t > \lambda
     \int_0^{\infty} \exp(- \lambda t) G (t) \,\mathd t. \]
\end{Lemma}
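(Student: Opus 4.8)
The plan is to divide the asserted inequality by $\lambda>0$ and reduce it to showing that, for all sufficiently large $\lambda$,
\[
\int_0^\infty \exp(-\lambda t)\,[F(t)-G(t)]\,\mathd t>0.
\]
First I would invoke the hypothesis $F\succ_{\mathcal D}G$ to fix $\delta>0$ with $F(t+)\ge G(t+)$ for all $t\in(0,\delta)$. Since $F,G\in\BV_{\rm loc}([0,\infty))$ are continuous off a countable (hence null) set, this yields $F(t)\ge G(t)$ for a.a.\ $t\in(0,\delta)$. The governing principle is the familiar fact that, as $\lambda\to\infty$, the weight $\lambda\exp(-\lambda t)$ concentrates all its mass near $t=0$, so the behaviour of $F-G$ on the initial interval $(0,\delta)$ must dominate whatever happens afterwards; the proof simply makes this quantitative, exactly as in Case~1 of the proof of Lemma~\ref{lem:10}.

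Concretely, I would split the integral at $\delta$ and estimate the two pieces separately. On $(0,\delta)$ the integrand is nonnegative and $\exp(-\lambda t)\ge\exp(-\lambda\delta)$, so
\[
\int_0^\delta \exp(-\lambda t)\,[F-G]\,\mathd t\ \ge\ \exp(-\lambda\delta)\,C,\qquad C:=\int_0^\delta [F(t)-G(t)]\,\mathd t .
\]
For the tail I would use only the $L^\infty$ bounds: with $M:=\|F\|_{L^\infty}+\|G\|_{L^\infty}$,
\[
\left|\int_\delta^\infty \exp(-\lambda t)\,[F-G]\,\mathd t\right|\ \le\ M\int_\delta^\infty \exp(-\lambda t)\,\mathd t\ =\ \frac{M}{\lambda}\exp(-\lambda\delta).
\]
Multiplying through by $\lambda$ and combining the two bounds gives
\[
\lambda\int_0^\infty \exp(-\lambda t)\,[F-G]\,\mathd t\ \ge\ \exp(-\lambda\delta)\,\big(\lambda C-M\big),
\]
which is strictly positive as soon as $\lambda>M/C$. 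Hence $\lambda_0:=M/C$ does the job, and it manifestly depends on $F$ and $G$ through $C$ and $M$.

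The only genuine point to watch is the strictness, i.e.\ that $C>0$. This holds precisely when $F$ is not a.e.\ equal to $G$ on $(0,\delta)$, which I would record as the standing nondegeneracy inherent in $F\succ_{\mathcal D}G$ for $G\neq F$: if $F$ and $G$ agreed throughout an entire right neighbourhood of $0$ the strict inequality could genuinely fail, since then the tail could tip the sign. This is also where the whole point of the statement lives: the threshold $\lambda_0=M/C$ blows up as the initial entropy advantage $C=\int_0^\delta(F-G)\,\mathd t$ shrinks, so there is in general no single $\lambda$ — and no fixed sequence $\lambda_n\to\infty$ — working simultaneously for all $G\in\mathcal G$. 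This non-uniformity in $G$ is exactly what prevents the Dafermos-maximal $F$ from always being the selected solution. I do not expect any serious obstacle beyond making the $C>0$ proviso explicit, the estimate itself being a one-line comparison of $\lambda\exp(-\lambda\delta)$ against $\exp(-\lambda\delta)$.
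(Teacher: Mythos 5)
Your proof is correct and follows exactly the paper's own argument: split the Laplace integral at the Dafermos time $\delta$, bound the head from below by $\lambda\exp(-\lambda\delta)\int_0^\delta [F(t)-G(t)]\,\mathd t$ using $F\geq G$ a.e.\ on $(0,\delta)$, and bound the tail by $(\| F \|_{L^{\infty}} + \| G \|_{L^{\infty}})\exp(-\lambda\delta)$, which gives strict positivity for $\lambda$ beyond an explicit threshold $\lambda_0 = M/C$. Your added proviso that $C=\int_0^\delta [F(t)-G(t)]\,\mathd t$ must be strictly positive (i.e.\ $F$ and $G$ do not coincide a.e.\ on a right neighbourhood of $0$) identifies a degenerate case in which the stated strict inequality can genuinely fail; the paper's proof passes over this silently, so your version is, if anything, slightly more careful.
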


\begin{proof}
%  There exists $\delta=\delta(F,G) > 0$ such that for all $t \in (0, \delta)$ it holds $G
%  (t) < F (t)$. Now, w
  We write
  \[ \lambda \int_0^{\infty} \exp(- \lambda t)  [F (t) - G (t)] \,\mathd t =
     \lambda \int_0^{\delta} \exp(- \lambda t)  [F (t) - G (t)] \,\mathd t +
     \lambda \int_{\delta}^{\infty} \exp(- \lambda t)  [F (t) - G (t)] \,\mathd t
  \]
  \[ \geqslant \lambda \exp(- \lambda \delta) \int_0^{\delta}  [F (t) - G (t)]
    \, \mathd t - (\| F \|_{L^{\infty}} + \| G \|_{L^{\infty}}) \exp(- \lambda
     \delta). \]
Hence there exists $\lambda_0 > 0$ (depending on $F, G$) such that the right
  hand side is strictly positive provided $\lambda \geqslant \lambda_0$.
\end{proof}

In other words, one would like to start the selection procedure given by an increasing  sequence $(\lambda_{n})_{n}$ with $\lambda_{1}$ as large as possible, in order to guarantee that the solution satisfying the Dafermos' criterium is selected. However, a uniform choice of $\lambda_{1}$ is not a priori known at the moment.

\def\cprime{$'$} \def\ocirc#1{\ifmmode\setbox0=\hbox{$#1$}\dimen0=\ht0
  \advance\dimen0 by1pt\rlap{\hbox to\wd0{\hss\raise\dimen0
  \hbox{\hskip.2em$\scriptscriptstyle\circ$}\hss}}#1\else {\accent"17 #1}\fi}

%\bibliography{citace}
%\bibliographystyle{plain}

\end{document}